\newcommand{\scaleeq}[2]{\scalebox{#1}{$\displaystyle #2$}}
\newcommand{\eqsmall}[1]{\scaleeq{0.8}{#1}}
\theoremstyle{plain}
  \declaretheorem[numberwithin=section]{theorem}
  \declaretheorem[numberlike=theorem]{corollary}
  \declaretheorem[numberlike=theorem]{lemma}
  \declaretheorem[numberlike=theorem]{conjecture}
\theoremstyle{definition}
  \declaretheorem[numberlike=theorem,qed=$\diamond$]{example}
  \declaretheorem[numberlike=theorem]{remark}
\newcommand{\assign}{:=}
\newcommand{\email}[1]{{\textit{Email:} \texttt{#1}}}
\newcommand{\rightarrowlim}{\mathop{\rightarrow}\limits}
\newcommand{\tmdummy}{$\mbox{}$}
\renewcommand{\geq}{\geqslant}
\renewcommand{\leq}{\leqslant}
\begin{document}

\title{Divisibility properties of sporadic Ap\'ery-like numbers}

\author{Amita Malik\thanks{\email{amalik10@illinois.edu}}\; and Armin Straub\thanks{\email{astraub@illinois.edu}}\\[0.4em]
Department of Mathematics\\
University of Illinois at Urbana-Champaign}

\date{August 1, 2015}
\maketitle

\begin{abstract}
In 1982, Gessel showed that the Ap\'ery numbers associated to the
irrationality of $\zeta(3)$ satisfy Lucas congruences. Our main result is to
prove corresponding congruences for all sporadic Ap\'ery-like sequences. In
several cases, we are able to employ approaches due to McIntosh, Samol--van
Straten and Rowland--Yassawi to establish these congruences. However, for the
sequences often labeled $s_{18}$ and $(\eta)$ we require a finer analysis.

As an application, we investigate modulo which numbers these sequences are
periodic. In particular, we show that the Almkvist--Zudilin numbers are
periodic modulo $8$, a special property which they share with the Ap\'ery
numbers. We also investigate primes which do not divide any term of a given
Ap\'ery-like sequence.
\end{abstract}

\section{Introduction}

In his surprising proof \cite{apery}, \cite{alf} of the irrationality of
$\zeta ( 3)$, R.~Ap\'ery introduced the sequence
\begin{equation}
  A (n) = \sum_{k = 0}^n \binom{n}{k}^2 \binom{n + k}{k}^2, \label{eq:A}
\end{equation}
which has since been referred to as the Ap\'ery sequence. It was shown by
I.~Gessel \cite[Theorem 1]{gessel-super} that, for any prime $p$, these
numbers satisfy the {\emph{Lucas congruences}}
\begin{equation}
  A ( n) \equiv A ( n_0) A ( n_1) \cdots A ( n_r) \pmod{p},
  \label{eq:lucas:i}
\end{equation}
where $n = n_0 + n_1 p + \cdots + n_r p^r$ is the expansion of $n$ in base
$p$. Initial work of F.~Beukers \cite{beukers-dwork} and D.~Zagier
\cite{zagier4}, which was extended by G.~Almkvist, W.~Zudilin
\cite{az-de06} and S.~Cooper \cite{cooper-sporadic}, has complemented the
Ap\'ery numbers with a, conjecturally finite, set of sequences, known as
Ap\'ery-like, which share (or are believed to share) many of the remarkable
properties of the Ap\'ery numbers, such as connections to modular forms
\cite{sb-picardfuchs}, \cite{beukers-apery87}, \cite{ahlgren-ono-apery}
or supercongruences \cite{beukers-apery85}, \cite{coster-sc},
\cite{ccs-apery}, \cite{os-supercong11}, \cite{os-domb},
\cite{oss-sporadic}. After briefly reviewing Ap\'ery-like sequences in
Section~\ref{sec:aperylike}, we prove in Sections~\ref{sec:lucas} and
\ref{sec:lucas2} our main result that all of these sequences also satisfy the
Lucas congruences \eqref{eq:lucas:i}. For all but two of the sequences, we
establish these congruences in Section~\ref{sec:lucas} by extending a general
approach provided by R.~McIntosh \cite{mcintosh-lucas}. The main difficulty,
however, lies in establishing these congruences for the sequence $(\eta)$. For
this sequence, and to a lesser extent for the sequence $s_{18}$, we require a
much finer analysis, which is given separately in Section~\ref{sec:lucas2}.

In the approaches of Gessel and McIntosh, binomial sums, like \eqref{eq:A},
are used to derive Lucas congruences. Other known approaches to proving Lucas
congruences for a sequence $C (n)$ are based on expressing $C (n)$ as the
constant terms of powers of a Laurent polynomial or as the diagonal
coefficients of a multivariate algebraic function. However, neither of these
approaches is known to apply, for instance, to the sequence $(\eta)$. In the
first approach, one seeks a Laurent polynomial $\Lambda (\boldsymbol{x}) =
\Lambda (x_1, \ldots, x_d)$ such that $C ( n)$ is the constant term of
$\Lambda ( \boldsymbol{x})$. In that case, we write $C (n) = \operatorname{ct} \Lambda
(\boldsymbol{x})^n$ for brevity. If the Newton polyhedron of $\Lambda (
\boldsymbol{x})$ has the origin as its only interior integral point, the results
of K.~Samol and D.~van Straten \cite{sd-laurent09} (see also
\cite{mv-laurent13}) apply to show that $C (n)$ satisfies the {\emph{Dwork
congruences}}
\begin{equation}
  C (p^r m + n) C (\lfloor n / p \rfloor) \equiv C (p^{r - 1} m + \lfloor n /
  p \rfloor) C (n) \pmod{p^r} \label{eq:dwork}
\end{equation}
for all primes $p$ and all integers $m, n \geq 0$, $r \geq 1$. The
case $r = 1$ of these congruences is equivalent to the Lucas congruences
\eqref{eq:lucas:i} for the sequence $C (n)$. For instance, in the case of the
Ap\'ery numbers \eqref{eq:A}, we have \cite[Remark~1.4]{s-apery}
\begin{equation*}
  A (n) = \operatorname{ct} \left[ \frac{(x + y) (z + 1) (x + y + z) (y + z + 1)}{x y
   z} \right]^n,
\end{equation*}
from which one may conclude that the Ap\'ery numbers satisfy the congruences
\eqref{eq:dwork}, generalizing \eqref{eq:lucas:i}. Similarly, for the sequence
$(\eta)$, one may derive from the binomial sum \eqref{eq:eta:s}, using
G.~Egorychev's method of coefficients \cite{egorychev-sums}, that its $n$th
term is given by $\operatorname{ct} \Lambda ( x, y, z)^n$, where
\begin{equation*}
  \Lambda ( x, y, z) = \left( 1 - \frac{1}{x y (1 + z)^5} \right)  \frac{(1 +
   x) (1 + y) (1 + z)^4}{z^3} .
\end{equation*}
However, $\Lambda ( x, y, z)$ is not a Laurent polynomial, and it is unclear
if and how one could express the sequence $( \eta)$ as constant terms of
powers of an appropriate Laurent polynomial. As a second general approach,
E.~Rowland and R.~Yassawi \cite{ry-diag13} show that Lucas congruences hold
for a certain class of sequences that can be represented as the diagonal
Taylor coefficients of $1 / Q (\boldsymbol{x})^{1 / s}$, where $s \geq 1$
is an integer and $Q (\boldsymbol{x}) \in \mathbb{Z} [\boldsymbol{x}]$ is a
multivariate polynomial. Again, while such representations are known for some
Ap\'ery-like sequences, see, for instance, \cite{s-apery}, no suitable
representations are available for the sequences $( \eta)$ or $s_{18}$.

It was conjectured by S.~Chowla, J.~Cowles and M.~Cowles \cite{ccc-apery}
and subsequently proven by I.~Gessel \cite{gessel-super} that
\begin{equation}
  A ( n) \equiv \left\{ \begin{array}{ll}
    1, & \text{if $n$ is even,}\\
    5, & \text{if $n$ is odd,}
  \end{array} \right. \pmod{8} . \label{eq:A:8}
\end{equation}

The congruences \eqref{eq:A:8} show that the Ap\'ery numbers are periodic
modulo $8$, and it was recently demonstrated by E.~Rowland and R.~Yassawi
\cite{ry-diag13} that they are not eventually periodic modulo $16$, thus
answering a question of Gessel. The Ap\'ery numbers are also periodic modulo
$3$ (see \eqref{eq:A:3}) and their values modulo $9$ are characterized by an
extension of the Lucas congruences \cite{gessel-super}; see also the recent
generalizations \cite{km-mod9} of C.~Krattenthaler and T.~M{\"u}ller, who
characterize generalized Ap\'ery numbers modulo $9$. As an application of
the Lucas congruences established in Sections~\ref{sec:lucas} and
\ref{sec:lucas2}, we address in Section~\ref{sec:periodic} the natural
question to which extent results like \eqref{eq:A:8} are true for
Ap\'ery-like numbers in general. In particular, we show in
Theorem~\ref{thm:AZ8} that the Almkvist--Zudilin numbers are periodic modulo
$8$ as well.

The primes $2, 3, 7, 13, 23, 29, 43, 47, \ldots$ do not divide any Ap\'ery
number $A ( n)$, and E.~Rowland and R.~Yassawi \cite{ry-diag13} pose the
question whether there are infinitely many such primes. While this question
remains open, we offer numerical and heuristic evidence that a positive
proportion of the primes, namely, about $e^{- 1 / 2} \sim 0.6065$, do not
divide any Ap\'ery number. In Section~\ref{sec:primes}, we investigate the
analogous question for other Ap\'ery-like numbers, and prove that Cooper's
sporadic sequences \cite{cooper-sporadic} behave markedly differently.
Indeed, for any given prime $p$, a fixed proportion of the last of the first
$p$ terms of these sequences is divisible by $p$. In the case of sums of
powers of binomial coefficients, such a result has been proven by N.~Calkin
\cite{calkin-bin}.

\section{Review of Ap\'ery-like numbers}\label{sec:aperylike}

Along with the Ap\'ery numbers $A (n)$, defined in \eqref{eq:A},
R.~Ap\'ery also introduced the sequence
\begin{equation*}
  B ( n) = \sum_{k = 0}^n \binom{n}{k}^2 \binom{n + k}{k},
\end{equation*}
which allowed him to (re)prove the irrationality of $\zeta ( 2)$. This
sequence is the solution of the three-term recursion
\begin{equation}
  (n + 1)^2 u_{n + 1} = (a n^2 + a n + b) u_n - c n^2 u_{n - 1},
  \label{eq:rec2-abc}
\end{equation}
with the choice of parameters $(a, b, c) = (11, 3, - 1)$ and initial
conditions $u_{- 1} = 0$, $u_0 = 1$. Because we divide by $(n + 1)^2$ at each
step, it is not to be expected that the recursion \eqref{eq:rec2-abc} should
have an integer solution. Inspired by F.~Beukers \cite{beukers-dwork},
D.~Zagier \cite{zagier4} conducted a systematic search for other choices of
the parameters $( a, b, c)$ for which the solution to \eqref{eq:rec2-abc},
with initial conditions $u_{- 1} = 0$, $u_0 = 1$, is integral. After
normalizing, and apart from degenerate cases, he discovered four
hypergeometric, four Legendrian as well as six sporadic solutions. It is still
open whether further solutions exist or even that there should be only
finitely many solutions. The six sporadic solutions are reproduced in
Table~\ref{tbl:sporadic2}. Note that each binomial sum included in this table
certifies that the corresponding sequence indeed consists of integers.

\begin{table}[h]\centering
  {\tabulinesep=1mm
  \begin{tabu}{|l|c|c|l|}
    \hline
    $(a, b, c)$ & \cite{zagier4} & \cite{asz-clausen} & $A (n)$ \\
    \hline
    $(7, 2, - 8)$ & {\textbf{A}} & (a) & \eqsmall{\sum_k \binom{n}{k}^3} \\
    \hline
    $(11, 3, - 1)$ & {\textbf{D}} & (b) & \eqsmall{\sum_k \binom{n}{k}^2 \binom{n + k}{n}} \\
    \hline
    $(10, 3, 9)$ & {\textbf{C}} & (c) & \eqsmall{\sum_k \binom{n}{k}^2 \binom{2 k}{k}} \\
    \hline
    $(12, 4, 32)$ & {\textbf{E}} & (d) & \eqsmall{\sum_k \binom{n}{k} \binom{2 k}{k} \binom{2 (n - k)}{n - k}} \\
    \hline
    $(9, 3, 27)$ & {\textbf{B}} & (f) & \eqsmall{\sum_k (- 1)^k 3^{n - 3 k} \binom{n}{3 k} \frac{(3 k) !}{k!^3}} \\
    \hline
    $(17, 6, 72)$ & {\textbf{F}} & (g) & \eqsmall{\sum_{k, l} (- 1)^k 8^{n - k} \binom{n}{k} \binom{k}{l}^3} \\
    \hline
  \end{tabu}}
  \caption{\label{tbl:sporadic2}The six sporadic solutions of \eqref{eq:rec2-abc}}
\end{table}

Similarly, the Ap\'ery numbers $A (n)$, defined in \eqref{eq:A}, are the
solution of the three-term recurrence
\begin{equation}
  (n + 1)^3 u_{n + 1} = (2 n + 1) (a n^2 + a n + b) u_n - n (c n^2 + d) u_{n -
  1}, \label{eq:rec3-abcd}
\end{equation}
with the choice of parameters $(a, b, c, d) = (17, 5, 1, 0)$ and initial
conditions $u_{- 1} = 0$, $u_0 = 1$. Systematic computer searches for further
integer solutions have been performed by G.~Almkvist and W.~Zudilin
\cite{az-de06} in the case $d = 0$ and, more recently, by S.~Cooper
\cite{cooper-sporadic}, who introduced the additional parameter $d$. As in
the case of \eqref{eq:rec2-abc}, apart from degenerate cases, only finitely
many sequences have been discovered. In the case $d = 0$, there are again six
sporadic sequences, which are recorded in Table~\ref{tbl:sporadic3}. Moreover,
by general principles (see \cite[Eq.~(17)]{cooper-sporadic}), each of the
sequences in Table~\ref{tbl:sporadic2} times $\binom{2 n}{n}$ is an integer
solution of \eqref{eq:rec3-abcd} with $d \neq 0$. Apart from such expected
solutions, Cooper also found three additional sporadic solutions, including
\begin{equation}
  s_{18} (n) = \sum^{\lfloor n / 3 \rfloor}_{k = 0} (- 1)^k \binom{n}{k}
  \binom{2 k}{k} \binom{2 (n - k)}{n - k} \left[ \binom{2 n - 3 k - 1}{n} +
  \binom{2 n - 3 k}{n} \right], \label{eq:s18}
\end{equation}
for $n \geq 1$, with $s_{18} (0) = 1$, as well as $s_7$ and $s_{10}$,
which are included in Table~\ref{tbl:sporadic3}. Remarkably, these sequences
are again connected to modular forms \cite{cooper-sporadic} (the subscript
refers to the level) and satisfy supercongruences, which are proved in
\cite{oss-sporadic}. Indeed, it was the corresponding modular forms and
Ramanujan-type series for $1 / \pi$ that led Cooper to study these sequences,
and the binomial expressions for $s_7$ and $s_{18}$ were found subsequently by
Zudilin (sequence $s_{10}$ was well-known before).

\begin{table}[h]\centering
  {\tabulinesep=1mm
  \begin{tabu}{|l|c|l|}
    \hline
    $(a, b, c, d)$ & \cite{asz-clausen} & $A (n)$\\
    \hline
    $(7, 3, 81, 0)$ & ($\delta$) & \eqsmall{\sum_k (- 1)^k 3^{n - 3 k} \binom{n}{3 k} \binom{n + k}{n} \frac{(3 k) !}{k!^3}} \\
    \hline
    $(11, 5, 125, 0)$ & ($\eta$) & \eqsmall{\sum_{k = 0}^{\lfloor n / 5 \rfloor} (- 1)^k \binom{n}{k}^3 \left( \binom{4 n - 5 k - 1}{3 n} + \binom{4 n - 5 k}{3 n} \right)} \\
    \hline
    $(10, 4, 64, 0)$ & ($\alpha$) & \eqsmall{\sum_k \binom{n}{k}^2 \binom{2 k}{k} \binom{2 (n - k)}{n - k}} \\
    \hline
    $(12, 4, 16, 0)$ & ($\epsilon$) & \eqsmall{\sum_k \binom{n}{k}^2 \binom{2 k}{n}^2} \\
    \hline
    $(9, 3, - 27, 0)$ & ($\zeta$) & \eqsmall{\sum_{k, l} \binom{n}{k}^2 \binom{n}{l} \binom{k}{l} \binom{k + l}{n}} \\
    \hline
    $(17, 5, 1, 0)$ & ($\gamma$) & \eqsmall{\sum_k \binom{n}{k}^2 \binom{n + k}{n}^2} \\
    \hline
    $(13, 4, - 27, 3)$ & $s_7$ & \eqsmall{\sum_k \binom{n}{k}^2 \binom{n + k}{k} \binom{2 k}{n}} \\
    \hline
    $(6, 2, - 64, 4)$ & $s_{10}$ & \eqsmall{\sum_k \binom{n}{k}^4} \\
    \hline
    $(14, 6, 192, - 12)$ & $s_{18}$ & defined in \eqref{eq:s18}\\
    \hline
  \end{tabu}}
  \caption{\label{tbl:sporadic3}The sporadic solutions of \eqref{eq:rec3-abcd}}
\end{table}

\section{Lucas congruences}\label{sec:lucas}

It is a well-known and beautiful classical result of Lucas \cite{lucas78}
that the binomial coefficients satisfy the congruences
\begin{equation}
  \binom{n}{k} \equiv \binom{n_0}{k_0} \binom{n_1}{k_1} \cdots
  \binom{n_r}{k_r} \pmod{p}, \label{eq:lucas:bin}
\end{equation}
where $p$ is a prime and $n_i$, respectively $k_i$, are the $p$-adic digits of
$n$ and $k$. That is, $n = n_0 + n_1 p + \cdots + n_r p^r$ and $k = k_0 + k_1
p + \cdots + k_r p^r$ are the expansions of $n$ and $k$ in base $p$.
Correspondingly, a sequence $a (n)$ is said to satisfy {\emph{Lucas
congruences}}, if the congruences
\begin{equation}
  a ( n) \equiv a ( n_0) a ( n_1) \cdots a ( n_r) \pmod{p}
  \label{eq:lucas}
\end{equation}
hold for all primes $p$. It was shown by I.~Gessel \cite[Theorem
1]{gessel-super} that the Ap\'ery numbers $A ( n)$, defined in
\eqref{eq:A}, satisfy Lucas congruences. E.~Deutsch and B.~Sagan
\cite[Theorem~5.9]{ds-cong} show that the Lucas congruences \eqref{eq:lucas}
in fact hold for the family of generalized Ap\'ery sequences
\begin{equation}
  A_{r, s} (n) = \sum_{k = 0}^n \binom{n}{k}^r \binom{n + k}{k}^s,
  \label{eq:A:x}
\end{equation}
with $r$ and $s$ positive integers. This family includes the sequences (a),
(b) from Table~\ref{tbl:sporadic2}, and the sequences ($\gamma$), $s_{10}$
from Table~\ref{tbl:sporadic3}. The purpose of this section and
Section~\ref{sec:lucas2} is to show that, in fact, all the Ap\'ery-like
sequences in Tables~\ref{tbl:sporadic2} and \ref{tbl:sporadic3} satisfy the
Lucas congruences \eqref{eq:lucas}. Using and extending the general framework
provided by R.~McIntosh \cite[Theorem~6]{mcintosh-lucas}, which we review
below, we are able to prove this claim for all of the sequences in the two
tables, with the exception of the two sequences $( \eta)$ and $s_{18}$, for
which we require a much finer analysis, which is given in
Section~\ref{sec:lucas2}.

\begin{theorem}
  \label{thm:lucas}Each of the sequences from Tables~\ref{tbl:sporadic2} and
  \ref{tbl:sporadic3} satisfies the Lucas congruences \eqref{eq:lucas}.
\end{theorem}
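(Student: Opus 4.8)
The plan is to reduce all but two of the sequences to a single general criterion and then dispatch the exceptional cases. Most of the sequences in Tables~\ref{tbl:sporadic2} and~\ref{tbl:sporadic3} are given as (possibly multiple) binomial sums of a rather uniform shape, so the natural strategy is to invoke McIntosh's framework \cite[Theorem~6]{mcintosh-lucas}: if $a(n) = \sum_k f(n,k)$ where $f(n,k)$ is, up to sign and fixed powers of constants, a product of binomial coefficients whose arguments are integral linear forms in $n$ and $k$, then under suitable hypotheses $a(n)$ satisfies \eqref{eq:lucas}. Concretely, I would first state (or re-derive) a clean version of McIntosh's criterion in a form covering sums $\sum_k \varepsilon^{g(n,k)} \prod_i \binom{\alpha_i n + \beta_i k + \gamma_i}{\delta_i n + \epsilon_i k + \zeta_i}$, using Lucas's congruence \eqref{eq:lucas:bin} termwise together with the observation that the carry pattern in base $p$ forces the sum over $0 \le k < p^{r+1}$ to factor as a product of the sums over each digit $k_j$.

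Second, I would go through the entries of the two tables one at a time, verifying that each fits the criterion. For the sequences (a), (b) in Table~\ref{tbl:sporadic2} and $(\gamma)$, $s_{10}$ in Table~\ref{tbl:sporadic3}, nothing new is needed: these are members of the Deutsch--Sagan family $A_{r,s}(n)$ in \eqref{eq:A:x} (or $\sum_k \binom{n}{k}^4$), for which Lucas congruences are already known. For the remaining sequences --- $\sum_k \binom{n}{k}^3$, $\sum_k \binom{n}{k}^2\binom{2k}{k}$, $\sum_k \binom{n}{k}\binom{2k}{k}\binom{2(n-k)}{n-k}$, the two level-$\delta$/level-$3$ type sums with a $3^{n-3k}$ factor and a $\binom{n}{3k}\frac{(3k)!}{k!^3}$ piece, the sum $\sum_{k,l}(-1)^k 8^{n-k}\binom{n}{k}\binom{k}{l}^3$, $\sum_k \binom{n}{k}^2\binom{2k}{n}^2$, $\sum_{k,l}\binom{n}{k}^2\binom{n}{l}\binom{k}{l}\binom{k+l}{n}$, and $s_7$ --- I would check that the linear forms appearing and the exponential prefactors ($3^{n-3k}$, $8^{n-k}$) are compatible with the base-$p$ digit decomposition; the prefactors cause no trouble since $3^{n-3k}$ and $8^{n-k}$ split multiplicatively across $p$-adic digits of $n$ (treating $p=2$ and $p=3$ separately, where the prefactor may vanish mod $p$ but then so does the relevant factor of the product). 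The factor $\frac{(3k)!}{k!^3}$ is an integer that is itself a product of binomial coefficients, $\binom{3k}{k}\binom{2k}{k}$, so it falls under the same umbrella. In each case the multi-index sums (over $k$ and $l$) are handled by applying the digit factorization in each summation variable simultaneously.

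The main obstacle is purely bookkeeping: one must confirm that for \emph{every} prime $p$ the hypotheses of the McIntosh-type criterion hold, in particular that no "boundary'' or carrying phenomenon breaks the termwise application of Lucas's theorem; this is where the argument has to be careful about small primes ($p=2,3$) and about the truncation ranges such as $k \le \lfloor n/3\rfloor$ in the $3^{n-3k}$ sequences, since outside the summation range the binomial coefficients must genuinely vanish mod $p$ rather than merely be conventionally set to zero. For the two sequences $(\eta)$ and $s_{18}$, whose binomial representations \eqref{eq:s18}, \eqref{eq:eta:s} involve terms like $\binom{4n-5k-1}{3n}+\binom{4n-5k}{3n}$ with arguments that are \emph{not} of the required form (the top entry can be smaller than the bottom in a way that does not reduce cleanly modulo $p$), the criterion does not apply, and I would simply defer these to the finer analysis of Section~\ref{sec:lucas2}, which is where the real work lies.
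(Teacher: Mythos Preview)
Your strategy matches the paper's: apply McIntosh's framework to all sequences except $(\eta)$ and $s_{18}$, and defer those to Section~\ref{sec:lucas2}. But you underestimate what the ``bookkeeping'' entails. McIntosh's Theorem~6 certifies only products of the shape $\prod_{m\ge 0}\binom{n+mk}{k}^{r_m}$ as DLP; it does \emph{not} cover $\binom{2k}{n}$ (appearing in $(\epsilon)$ and $s_7$), $\binom{k+l}{n}$ (appearing in $(\zeta)$), or $\binom{n}{3k}$ (appearing in (f) and $(\delta)$). These are not general linear forms one can feed into a single criterion; the paper closes each gap with a dedicated lemma. It introduces a \emph{triple} Lucas property and shows $\binom{n}{j}\binom{k+j}{n}$ is TLP (Lemma~\ref{lem:tlp1}), from which $\binom{n}{k}\binom{2k}{n}$ is DLP (Corollary~\ref{cor:dlp1}); and it proves separately that $3^{n-3k}\binom{n}{3k}\frac{(3k)!}{k!^3}$ is DLP (Lemma~\ref{lem:dlp2}).

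The last point in particular is not a consequence of writing $\frac{(3k)!}{k!^3}=\binom{3k}{k}\binom{2k}{k}$ and saying ``it falls under the same umbrella'': the factor $\binom{3k}{k}$ on its own does \emph{not} satisfy Lucas congruences, because when $3k_0\ge p$ a carry spoils the digit factorization. The paper's argument works only because $\binom{2k}{k}\equiv 0\pmod p$ whenever $2k_0\ge p$, so the bad range for $\binom{3k}{k}$ is already killed by the companion factor. A similar interlocking argument is needed to pass from $\frac{(3k)!}{k!^3}$ to $\binom{n}{3k}\frac{(3k)!}{k!^3}$. Your outline would succeed once these lemmas are supplied, but they are genuine extensions of McIntosh's framework, not routine verifications of its hypotheses.
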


\begin{remark}
  The Lucas congruences \eqref{eq:lucas}, in general, do not extend to prime
  powers. However, it is shown in \cite{gessel-super}, and generalized in
  \cite{km-mod9}, that the Lucas congruences modulo $3$ for the Ap\'ery
  numbers extend to hold modulo $9$.
  
  On the other hand, numerical evidence suggests that all the Ap\'ery-like
  sequences from Tables~\ref{tbl:sporadic2} and \ref{tbl:sporadic3} in fact
  satisfy the Dwork congruences \eqref{eq:dwork}. While
  Theorem~\ref{thm:lucas} proves the case $r = 1$ of these congruences, it
  would be desirable to establish the corresponding congruences modulo higher
  powers of primes.
\end{remark}

Following \cite{mcintosh-lucas}, we say that a function $L :
\mathbb{Z}_{\geq 0}^2 \rightarrow \mathbb{Z}$ has the {\emph{double
Lucas property}} ({\textbf{DLP}}) if $L ( n, k) = 0$, for $k > n$, and if
\begin{equation}
  L ( n, k) \equiv L ( n_0, k_0) L ( n_1, k_1) \cdots L ( n_r, k_r)
  \pmod{p}, \label{eq:dlp}
\end{equation}
for every prime $p$. Here, as in \eqref{eq:lucas:bin}, $n_i$ and $k_i$ are the
$p$-adic digits of $n$ and $k$, respectively. Equation \eqref{eq:lucas:bin}
shows that the binomial coefficients $\binom{n}{k}$ are a {\textbf{DLP}}
function. More generally, it is shown in \cite[Theorem~6]{mcintosh-lucas}
that, for positive integers $r_0, r_1, \ldots, r_m$,
\begin{equation}
  L ( n, k) = \binom{n}{k}^{r_0} \binom{n + k}{k}^{r_1} \binom{n + 2
  k}{k}^{r_2} \cdots \binom{n + m k}{k}^{r_m} \label{eq:lucas:binx}
\end{equation}
is a {\textbf{DLP}} function. For instance, choosing the exponents as $r_i =
1$, we find that the multinomial coefficient
\begin{equation*}
  \binom{n + m k}{k, k, \ldots, k, n - k} = \frac{( n + m k) !}{k!^{m + 1} (
   n - k) !}
\end{equation*}
is a {\textbf{DLP}} function for any integer $m \geq 0$.

Suppose that $L ( n, k)$ is a {\textbf{DLP}} function and that $G ( n)$ and
$H ( n)$ are {\textbf{LP}} functions, that is, the sequences $G ( n)$ and $H
( n)$ satisfy the Lucas congruences \eqref{eq:lucas}. Then, as shown in
\cite[Theorem~5]{mcintosh-lucas},
\begin{equation}
  F ( n) = \sum_{k = 0}^n L ( n, k) G ( k) H ( n - k) \label{eq:dlp:sum}
\end{equation}
is an {\textbf{LP}} function. Note that \eqref{eq:lucas:binx} and
\eqref{eq:dlp:sum} combined are already sufficient to prove that the
generalized Ap\'ery sequences, defined in \eqref{eq:A:x}, satisfy Lucas
congruences. In order to apply this machinery more generally, and prove
Theorem~\ref{thm:lucas}, our next results extend the repertoire of
{\textbf{DLP}} functions. In fact, it turns out that we need a natural
extension of the Lucas property to the case of three variables. We say that a
function $M : \mathbb{Z}_{\geq 0}^3 \rightarrow \mathbb{Z}$ has the
{\emph{triple Lucas property}} ({\textbf{TLP}}) if $M ( n, k, j) = 0$, for
$j > n$, and if
\begin{equation}
  M ( n, k, j) \equiv M ( n_0, k_0, j_0) \cdots M ( n_r, k_r, j_r)
  \pmod{p}, \label{eq:tlp}
\end{equation}
for every prime $p$, where $n_i$, $k_i$ and $j_i$ are the $p$-adic digits of
$n$, $k$ and $j$, respectively. It is straightforward to prove the following
analog of \eqref{eq:dlp:sum} for {\textbf{TLP}} functions.

\begin{lemma}
  \label{lem:tlp:sum}If $M ( n, k, j)$ is a {\textbf{TLP}} function, then
  \begin{equation*}
    L ( n, k) = \sum_{j = 0}^n M ( n, k, j)
  \end{equation*}
  satisfies the double Lucas congruences \eqref{eq:dlp}. In particular, if $L
  ( n, k) = 0$, for $k > n$, then $L ( n, k)$ is a {\textbf{DLP}} function.
\end{lemma}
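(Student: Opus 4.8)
The plan is to fix a prime $p$ and unfold the definition $L(n,k)=\sum_{j=0}^{n}M(n,k,j)$, using the two defining features of a {\textbf{TLP}} function --- the vanishing $M(n,k,j)=0$ for $j>n$ and the digitwise multiplicativity \eqref{eq:tlp} --- to first enlarge the range of summation to a full ``box'' of $p$-adic digit vectors and then factor the resulting sum of products into a product of sums, one factor per digit.

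Concretely, I would choose $r$ large enough that $n,k<p^{r+1}$ and write the base-$p$ expansions $n=\sum_{i=0}^{r}n_ip^i$ and $k=\sum_{i=0}^{r}k_ip^i$ with digits $n_i,k_i\in\{0,\dots,p-1\}$. Since $M(n,k,j)=0$ whenever $j>n$, the sum defining $L(n,k)$ is unchanged if $j$ is instead allowed to run over all of $\{0,1,\dots,p^{r+1}-1\}$, and each such $j$ corresponds bijectively to its digit vector $(j_0,\dots,j_r)\in\{0,\dots,p-1\}^{r+1}$. Applying \eqref{eq:tlp} termwise,
\begin{equation*}
  L(n,k)\;=\;\sum_{j=0}^{p^{r+1}-1}M(n,k,j)\;\equiv\;\sum_{(j_0,\dots,j_r)}\ \prod_{i=0}^{r}M(n_i,k_i,j_i)\pmod p,
\end{equation*}
the right-hand sum being over all digit vectors. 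As the $i$th factor of the summand depends only on $j_i$, the distributive law rewrites it as $\prod_{i=0}^{r}\bigl(\sum_{t=0}^{p-1}M(n_i,k_i,t)\bigr)$, and for each $i$ the bound $n_i\le p-1$ together with $M(n_i,k_i,t)=0$ for $t>n_i$ shows that the inner sum equals $\sum_{t=0}^{n_i}M(n_i,k_i,t)=L(n_i,k_i)$. Hence $L(n,k)\equiv L(n_0,k_0)\cdots L(n_r,k_r)\pmod p$. Since $p$ was arbitrary, this is the double Lucas congruence \eqref{eq:dlp}, and combined with the hypothesis $L(n,k)=0$ for $k>n$ it says precisely that $L$ is a {\textbf{DLP}} function. (If $r$ exceeds the length of the $p$-adic expansion of $n$ or of $k$, the right-hand side merely acquires extra factors $L(0,0)=M(0,0,0)$; since \eqref{eq:tlp} taken at $n=k=j=0$ forces $M(0,0,0)\equiv 0$ or $1$ modulo $p$, the congruence \eqref{eq:dlp} holds for the actual $p$-adic digits of $n$ and $k$ in either case.)

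I do not expect a genuine obstacle here; the argument is a short bookkeeping computation. The two steps deserving a word of care are the enlargement of the triangular summation range $0\le j\le n$ to the box $\{0,\dots,p-1\}^{r+1}$ of digit vectors, and the passage from a sum of products to a product of sums --- and it is exactly the two clauses in the definition of a {\textbf{TLP}} function (vanishing for $j>n$, and \eqref{eq:tlp}) that legitimize these moves.
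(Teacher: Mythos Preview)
Your proposal is correct and follows essentially the same approach as the paper: extend the summation range using the vanishing condition $M(n,k,j)=0$ for $j>n$, then factor via the digitwise multiplicativity \eqref{eq:tlp}. The only cosmetic difference is that the paper peels off one $p$-adic digit at a time (proving $L(n_0+n'p,k_0+k'p)\equiv L(n_0,k_0)L(n',k')$), whereas you unfold all digits simultaneously; the mechanism is identical. Your closing parenthetical about $M(0,0,0)$ is unnecessary if you simply take $r$ minimal, i.e., $r+1$ equal to the maximum of the base-$p$ lengths of $n$ and $k$.
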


\begin{proof}
  Let $p$ be a prime. It is enough to show that, for any nonnegative integers
  $n_0, n', k_0, k'$ such that $n_0 < p$ and $k_0 < p$,
  \begin{equation}
    L ( n_0 + n' p, k_0 + k' p) \equiv L ( n_0, k_0) L ( n', k') \pmod{p} . \label{eq:lucas:L}
  \end{equation}
  Since the sum defining $L ( n, k)$ is naturally supported on $j \in \{ 0, 1,
  \ldots, n \}$, we may extend it over all $j \in \mathbb{Z}$. Modulo $p$, we
  have
  \begin{eqnarray*}
    L ( n, k) & = & \sum_{j \in \mathbb{Z}} M ( n, k, j)\\
    & = & \sum_{j_0 = 0}^{p - 1} \sum_{j' \in \mathbb{Z}} M ( n, k, j_0 + j'
    p)\\
    & \equiv & \sum_{j_0 \in \mathbb{Z}} \sum_{j' \in \mathbb{Z}} M ( n_0,
    k_0, j_0) M ( n', k', j')\\
    & = & L ( n_0, k_0) L ( n', k'),
  \end{eqnarray*}
  which is what we had to prove.
\end{proof}

\begin{lemma}
  \label{lem:tlp1}The function
  \begin{equation*}
    M ( n, k, j) = \binom{n}{j} \binom{k + j}{n}
  \end{equation*}
  is a {\textbf{TLP}} function.
\end{lemma}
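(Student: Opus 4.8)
The vanishing condition $M ( n, k, j) = 0$ for $j > n$ is immediate, since $\binom{n}{j} = 0$ whenever $j > n$. For the congruence \eqref{eq:tlp}, we argue as in the proof of Lemma~\ref{lem:tlp:sum}: it is enough (the general case following by induction on the number of $p$-adic digits) to establish the single-step version
\begin{equation*}
  M ( n_0 + n' p, k_0 + k' p, j_0 + j' p) \equiv M ( n_0, k_0, j_0) M ( n', k', j') \pmod{p}
\end{equation*}
for all nonnegative integers $n', k', j'$ and all digits $n_0, k_0, j_0 \in \{ 0, 1, \ldots, p - 1 \}$. Writing $n = n_0 + n' p$, $k = k_0 + k' p$, $j = j_0 + j' p$, the first factor of $M ( n, k, j)$ is handled directly by Lucas' congruence \eqref{eq:lucas:bin}, which gives $\binom{n}{j} \equiv \binom{n_0}{j_0} \binom{n'}{j'} \pmod{p}$. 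The only subtlety concerns the second factor $\binom{k + j}{n}$: the argument $k + j$ mixes the $p$-adic digits of $k$ and $j$, so its least significant digit need not equal $k_0 + j_0$, and one must distinguish whether a carry occurs. If $k_0 + j_0 < p$, there is no carry, $k + j = ( k_0 + j_0) + ( k' + j') p$ has least significant digit $k_0 + j_0$, and \eqref{eq:lucas:bin} yields $\binom{k + j}{n} \equiv \binom{k_0 + j_0}{n_0} \binom{k' + j'}{n'} \pmod{p}$; multiplying the two congruences produces exactly $M ( n_0, k_0, j_0) M ( n', k', j')$, as desired.

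The remaining, and only delicate, case is $k_0 + j_0 \geq p$. Then $k_0 + j_0 = p + s$ with $0 \leq s = k_0 + j_0 - p < j_0$ (using $k_0 < p$) and $s < p$, so $k + j = s + ( k' + j' + 1) p$, and \eqref{eq:lucas:bin} gives $\binom{k + j}{n} \equiv \binom{s}{n_0} \binom{k' + j' + 1}{n'} \pmod{p}$. We claim both sides of the single-step congruence then vanish modulo $p$. On the left, $M ( n, k, j) \equiv \binom{n_0}{j_0} \binom{s}{n_0} \cdot \binom{n'}{j'} \binom{k' + j' + 1}{n'} \pmod{p}$, and the factor $\binom{n_0}{j_0} \binom{s}{n_0}$ is $0$: if $j_0 > n_0$ the first binomial vanishes, while if $j_0 \leq n_0$ then $s < j_0 \leq n_0$ forces the second to vanish. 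On the right, $M ( n_0, k_0, j_0) = \binom{n_0}{j_0} \binom{p + s}{n_0}$; since $p + s$ has base-$p$ digits $s$ and $1$, Lucas' congruence \eqref{eq:lucas:bin} gives $\binom{p + s}{n_0} \equiv \binom{s}{n_0} \pmod{p}$, hence $M ( n_0, k_0, j_0) \equiv \binom{n_0}{j_0} \binom{s}{n_0} \equiv 0 \pmod{p}$ by the same dichotomy, so that $M ( n_0, k_0, j_0) M ( n', k', j') \equiv 0 \pmod{p}$ as well. This settles the case analysis.

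The main obstacle is precisely this carry case: a naive digit-by-digit application of Lucas to $\binom{k + j}{n}$ fails because of carries in $k + j$. The point that makes it work is that a carry $k_0 + j_0 \geq p$ forces $s = k_0 + j_0 - p < j_0$, so $\binom{n_0}{j_0}$ and $\binom{s}{n_0}$ cannot both be nonzero; consequently $p$ divides $M ( n_0, k_0, j_0)$ and also $M ( n, k, j)$, and the required congruence holds trivially in that range.
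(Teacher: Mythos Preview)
Your proof is correct and essentially the same as the paper's: both hinge on the observation that in the problematic range (your carry case $k_0+j_0\geq p$, equivalently the extra term $\binom{k_0+j_0}{n_0+p}$ arising in the paper's coefficient extraction) the factors $\binom{n_0}{j_0}$ and $\binom{k_0+j_0-p}{n_0}$ cannot simultaneously be nonzero. The paper organizes the argument by first disposing of $j_0>n_0$ and then extracting coefficients from $(1+x)^{k+j}\equiv (1+x)^{k_0+j_0}(1+x^p)^{k'+j'}$, whereas you split directly on whether a carry occurs; the underlying content is identical.
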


\begin{proof}
  Clearly, $M ( n, k, j) = 0$, for $j > n$. In order to show that $M ( n, k,
  j)$ is a {\textbf{TLP}} function, we therefore need to show that, for any
  prime $p$,
  \begin{equation}
    M ( n_0 + n' p, k_0 + k' p, j_0 + j' p) \equiv M ( n_0, k_0, j_0) M ( n',
    k', j') \pmod{p}, \label{eq:lucas:M}
  \end{equation}
  provided that $0 \leq n_0, k_0, j_0 < p$ and $n', k', j' \geq 0$.
  Observe that in the case $j_0 > n_0$ both sides of the congruence
  \eqref{eq:lucas:M} vanish because of the Lucas congruences
  \eqref{eq:lucas:bin} for the binomial coefficients. We may therefore proceed
  under the assumption that $j_0 \leq n_0$.
  
  Writing $[ x^n] f ( x)$ for the coefficient of $x^n$ in the polynomial $f (
  x)$, we begin with the simple observation that
  \begin{equation*}
    \binom{k + j}{n} = [ x^n] ( 1 + x)^{k + j} .
  \end{equation*}
  Modulo $p$, we have
  \begin{equation*}
    ( 1 + x)^{k + j} = ( 1 + x)^{k_0 + j_0} ( 1 + x)^{( k' + j') p} \equiv (
     1 + x)^{k_0 + j_0} ( 1 + x^p)^{k' + j'} \pmod{p} .
  \end{equation*}
  Since $0 \leq k_0 + j_0 < 2 p$, extracting the coefficient of $x^n =
  x^{n_0} ( x^p)^{n'}$ from this product results in the congruence
  \begin{equation*}
    \binom{k + j}{n} \equiv \binom{k_0 + j_0}{n_0} \binom{k' + j'}{n'} +
     \binom{k_0 + j_0}{n_0 + p} \binom{k' + j'}{n' - 1} \pmod{p}
     .
  \end{equation*}
  Note that, under our assumption that $j_0 \leq n_0$, the second term on
  the right-hand side of this congruence vanishes (since $n_0 + p \geq
  j_0 + p > j_0 + k_0$). This, along with \eqref{eq:lucas:bin}, proves
  \eqref{eq:lucas:M}.
\end{proof}

\begin{corollary}
  \label{cor:dlp1}The function
  \begin{equation*}
    L ( n, k) = \binom{n}{k} \binom{2 k}{n}
  \end{equation*}
  is a {\textbf{DLP}} function.
\end{corollary}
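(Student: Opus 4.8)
The plan is to deduce the corollary as an immediate specialization of Lemma~\ref{lem:tlp1}. The key observation is that
\[ L(n,k) = \binom{n}{k}\binom{2k}{n} = M(n,k,k), \]
where $M(n,k,j) = \binom{n}{j}\binom{k+j}{n}$ is exactly the function that Lemma~\ref{lem:tlp1} asserts to be a \textbf{TLP} function. Because $M$ is \textbf{TLP}, the congruence \eqref{eq:tlp} holds for every prime $p$ and all nonnegative integers $n,k,j$; I would simply set $j=k$. Since the base-$p$ digits of $j=k$ are the digits $k_i$ of $k$, this yields
\[ L(n,k) = M(n,k,k) \equiv M(n_0,k_0,k_0)\cdots M(n_r,k_r,k_r) = L(n_0,k_0)\cdots L(n_r,k_r) \pmod p, \]
which is precisely the double Lucas congruence \eqref{eq:dlp} for $L$.

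To finish, one records that $L(n,k)=0$ for $k>n$, which is clear from the binomial factor $\binom{n}{k}$ --- equivalently, it is the case $j=k$ of the vanishing $M(n,k,j)=0$ for $j>n$ that is part of the definition of a \textbf{TLP} function. Together with the congruence above, this is exactly what is needed for $L(n,k)$ to be a \textbf{DLP} function.

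There is essentially no obstacle here: the whole proof is the identity $\binom{n}{k}\binom{2k}{n}=M(n,k,k)$ followed by putting $j=k$ in an already-established congruence, the only (minor) point being that this substitution is legitimate because it uses the single digit expansion of $k$ simultaneously in the second and third arguments of $M$. If one preferred not to invoke the diagonal $j=k$, an alternative is to write $\binom{2k}{n}=\sum_j\binom{k}{j}\binom{k}{n-j}$ by Vandermonde, so that $L(n,k)=\sum_j M'(n,k,j)$ with $M'(n,k,j)=\binom{n}{k}\binom{k}{j}\binom{k}{n-j}$, then verify that $M'$ is \textbf{TLP} --- here $M'(n,k,j)\not\equiv 0\pmod p$ forces $j_i\leq k_i\leq n_i$ for all $i$ via Lucas for $\binom{n}{k}$ and $\binom{k}{j}$, so $n-j$ has no base-$p$ borrows and Lucas applies to all three factors simultaneously --- and finally apply Lemma~\ref{lem:tlp:sum}; but the specialization argument is shorter.
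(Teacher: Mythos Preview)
Your proof is correct and is exactly the paper's approach: the paper's proof is the single line ``Set $j=k$ in Lemma~\ref{lem:tlp1},'' and you have simply spelled out why this specialization yields the double Lucas congruence and the required vanishing for $k>n$.
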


\begin{proof}
  Set $j = k$ in Lemma~\ref{lem:tlp1}.
\end{proof}

\begin{lemma}
  \label{lem:dlp2}The function
  \begin{equation*}
    L ( n, k) = 3^{n - 3 k} \binom{n}{3 k} \frac{( 3 k) !}{k!^3}
  \end{equation*}
  is a {\textbf{DLP}} function.
\end{lemma}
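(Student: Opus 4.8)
The plan is to realize $L(n,k)$ as a single trivariate polynomial coefficient and then reuse the Frobenius-and-extract argument from the proof of Lemma~\ref{lem:tlp1}. First I would note that $L(n,k) = 0$ whenever $3k > n$, since then $\binom{n}{3k} = 0$; in particular $L(n,k) = 0$ for $k > n$, so the support condition in the definition of a {\textbf{DLP}} function holds. By the standard reduction to two $p$-adic digits (compare the proof of Lemma~\ref{lem:tlp:sum}), it is then enough to establish, for every prime $p$ and all integers $n', k' \geq 0$ and $0 \leq n_0, k_0 < p$, the congruence
\begin{equation*}
  L(n_0 + n' p, \, k_0 + k' p) \equiv L(n_0, k_0) \, L(n', k') \pmod{p} .
\end{equation*}

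The crucial ingredient I would use is the identity
\begin{equation*}
  L(n, k) = [r^k b^k g^k] \, (r + b + g + 3)^n ,
\end{equation*}
where $[r^k b^k g^k]$ denotes extraction of the indicated coefficient, in the spirit of the notation introduced in the proof of Lemma~\ref{lem:tlp1}. This is immediate from the multinomial theorem, since the coefficient of $r^k b^k g^k$ in $(r + b + g + 3)^n$ equals $\frac{n!}{k!^3 (n - 3k)!} 3^{n - 3k} = \binom{n}{3k} \frac{(3k)!}{k!^3} 3^{n - 3k}$, with the usual convention that this vanishes when $n < 3k$. I would then reduce $(r + b + g + 3)^{n_0 + n' p}$ modulo $p$, using that $(x_1 + \cdots + x_m)^p \equiv x_1^p + \cdots + x_m^p \pmod{p}$ together with Fermat's little theorem $3^p \equiv 3 \pmod{p}$, to obtain
\begin{equation*}
  (r + b + g + 3)^{n_0 + n' p} \equiv (r + b + g + 3)^{n_0} \, (r^p + b^p + g^p + 3)^{n'} \pmod{p} .
\end{equation*}

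Finally I would extract the coefficient of $r^{k_0 + k' p} b^{k_0 + k' p} g^{k_0 + k' p}$ from the right-hand side. In any monomial of this product, the exponent of $r$ coming from the first factor lies in $\{0, 1, \ldots, n_0\} \subseteq \{0, \ldots, p - 1\}$, while the exponent of $r$ coming from the second factor is a multiple of $p$; since $0 \leq k_0 < p$, uniqueness of the base-$p$ expansion forces the first factor to supply $r^{k_0}$ and the second to supply $r^{k' p}$, and likewise for $b$ and $g$. Hence the coefficient factors as
\begin{equation*}
  \bigl( [r^{k_0} b^{k_0} g^{k_0}] (r + b + g + 3)^{n_0} \bigr) \cdot \bigl( [r^{k' p} b^{k' p} g^{k' p}] (r^p + b^p + g^p + 3)^{n'} \bigr) = L(n_0, k_0) \, L(n', k') ,
\end{equation*}
the second factor being evaluated by the substitution $u = r^p$, $v = b^p$, $w = g^p$ and a second use of the identity for $L$. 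This gives the two-digit congruence, and iterating it over the base-$p$ digits of $n$ and $k$ yields \eqref{eq:dlp}. The one genuinely creative step is finding the weighted multinomial $(r + b + g + 3)^n$ that simultaneously encodes the three factors $3^{n - 3k}$, $\binom{n}{3k}$, and $\frac{(3k)!}{k!^3}$; once it is in hand, the remaining manipulations are the routine Frobenius-and-extract argument, and the extra constant $3$ causes no difficulty precisely because $3^p \equiv 3 \pmod{p}$.
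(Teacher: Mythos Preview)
Your proof is correct and takes a genuinely different route from the paper's.

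The paper proceeds in three stages: it first shows that $\frac{(3k)!}{k!^3} = \binom{3k}{k}\binom{2k}{k}$ is an {\textbf{LP}} function (using the {\textbf{LP}} property of central binomials and a separate generating-function argument for $\binom{3k}{k}$ restricted to the range $k_0 < p/2$), then upgrades this to the congruence for $\binom{n}{3k}\frac{(3k)!}{k!^3}$ by another coefficient-extraction argument, and finally folds in the factor $3^{n-3k}$ via Fermat's little theorem together with the observation that both sides vanish when $3k_0 > n_0$ or $3k' > n'$. Your argument replaces all of this with the single identity $L(n,k) = [r^k b^k g^k](r+b+g+3)^n$ followed by one application of Frobenius; the factor $3^{n-3k}$, the multinomial $\binom{n}{3k}\frac{(3k)!}{k!^3}$, and the vanishing for $3k>n$ are all encoded simultaneously. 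This is cleaner and immediately generalizes (the same argument gives the {\textbf{DLP}} property for $c^{\,n-mk}\binom{n}{mk}\frac{(mk)!}{k!^m}$ via $(x_1+\cdots+x_m+c)^n$). The paper's stepwise approach, on the other hand, isolates intermediate facts such as the {\textbf{LP}} property of $\frac{(3k)!}{k!^3}$ that may be of independent use.
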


\begin{proof}
  Let $p$ be a prime. As usual, we write $n = n_0 + n' p$ and $k = k_0 + k' p$
  where $0 \leq n_0 < p$ and $0 \leq k_0 < p$. In light of
  \eqref{eq:lucas:bin} and \eqref{eq:dlp:sum}, the simple observation
  \begin{equation}
    \binom{2 n}{n} = \sum_{k = 0}^n \binom{n}{k}^2, \label{eq:cenbin}
  \end{equation}
  demonstrates that the sequence of central binomial coefficients is an
  {\textbf{LP}} function. We claim that
  \begin{equation*}
    \frac{( 3 k) !}{k!^3} = \binom{3 k}{k} \binom{2 k}{k}
  \end{equation*}
  is an {\textbf{LP}} function as well. From the Lucas congruences for the
  central binomials, that is
  \begin{equation*}
    \binom{2 k}{k} \equiv \binom{2 k_0}{k_0} \binom{2 k'}{k'} \pmod{p},
  \end{equation*}
  we observe that $\binom{2 k}{k}$ is divisible by $p$ if $2 k_0 \geq p$.
  Hence, we only need to show the congruences
  \begin{equation}
    \frac{( 3 k) !}{k!^3} \equiv \frac{( 3 k_0) !}{k_0 !^3}  \frac{( 3 k')
    !}{k' !^3} \pmod{p} \label{eq:lucas:3k}
  \end{equation}
  under the assumption that $k_0 < p / 2$. Note that
  \begin{eqnarray*}
    \binom{3 k}{k} & = & [ x^k] ( 1 + x)^{3 k}\\
    & \equiv & [ x^{k_0} ( x^p)^{k'}] ( 1 + x)^{3 k_0} ( 1 + x^p)^{3 k'}
    \pmod{p}\\
    & = & \binom{3 k_0}{k_0} \binom{3 k'}{k'} + \binom{3 k_0}{k_0 + p}
    \binom{3 k'}{k' - 1} + \binom{3 k_0}{k_0 + 2 p} \binom{3 k'}{k' - 2} .
  \end{eqnarray*}
  In the case $k_0 < p / 2$, we have $k_0 + p > 3 k_0$, so that the last two
  terms on the right-hand side vanish. This proves \eqref{eq:lucas:3k}.
  
  Next, we claim that
  \begin{equation}
    \binom{n}{3 k} \frac{( 3 k) !}{k!^3} \equiv \binom{n_0}{3 k_0} \frac{( 3
    k_0) !}{k_0 !^3} \binom{n'}{3 k'} \frac{( 3 k') !}{k' !^3} \pmod{p} . \label{eq:lucas:3kb}
  \end{equation}
  By congruence \eqref{eq:lucas:3k}, both sides vanish modulo $p$ if $3 k_0
  \geq p$. On the other hand, if $3 k_0 < p$, then the usual argument
  shows that
  \begin{equation*}
    \binom{n}{3 k} \equiv [ x^{3 k_0} ( x^p)^{3 k'}] ( 1 + x)^{n_0} ( 1 +
     x^p)^{n'} = \binom{n_0}{3 k_0} \binom{n'}{3 k'} \pmod{p} .
  \end{equation*}
  In combination with \eqref{eq:lucas:3k}, this proves \eqref{eq:lucas:3kb}.
  
  Finally, the congruences $L ( n, k) \equiv L ( n_0, k_0) L ( n', k')$, that
  is
  \begin{equation}
    3^{n - 3 k} \binom{n}{3 k} \frac{( 3 k) !}{k!^3} \equiv 3^{n_0 - 3 k_0}
    \binom{n_0}{3 k_0} \frac{( 3 k_0) !}{k_0 !^3} 3^{n' - 3 k'} \binom{n'}{3
    k'} \frac{( 3 k') !}{k' !^3} \pmod{p},
    \label{eq:lucas:3kb3}
  \end{equation}
  follow from Fermat's little theorem and the fact that both sides vanish if
  $3 k_0 > n_0$ or $3 k' > n'$.
\end{proof}

We are now in a comfortable position to prove Theorem~\ref{thm:lucas} for all
but two of the sporadic Ap\'ery-like sequences. To show that sequences
$(\eta)$ and $s_{18}$ satisfy Lucas congruences as well requires considerable
additional effort, and the corresponding proofs are given in
Section~\ref{sec:lucas2}.

\begin{proof}[Proof of Theorem~\ref{thm:lucas}]
  Recall from \eqref{eq:cenbin} that the
  sequence of central binomial coefficients is an {\textbf{LP}} function.
  Further armed with \eqref{eq:lucas:binx} as well as Corollary~\ref{cor:dlp1}
  and Lemma~\ref{lem:dlp2}, the claimed Lucas congruences for the sequences $(
  \text{a})$, $( \text{b})$, $( \text{c})$, $( \text{d})$, $( \text{f})$, $(
  \alpha)$, $( \epsilon)$, $( \gamma)$, $s_{10}$, $s_7$ follow from
  \eqref{eq:dlp:sum}. It remains to consider the sequences $( \text{g})$, $(
  \delta)$, $( \zeta)$ as well as $( \eta)$ and $s_{18}$.
  
  Sequence $( \text{g})$ can be written as
  \begin{equation*}
    A_g ( n) = \sum_{k = 0}^n (- 1)^k 8^{n - k} \binom{n}{k} F ( k),
  \end{equation*}
  where $F ( k) = \sum_{l = 0}^k \binom{k}{l}^3$ are the Franel numbers
  (sequence $( \text{a})$), which we already know to be an {\textbf{LP}}
  function. As a consequence of Fermat's little theorem, the sequence $a^n$ is
  an {\textbf{LP}} function for any integer $a$. Hence, equation
  \eqref{eq:dlp:sum} applies to show that $A_g ( n)$ is an {\textbf{LP}}
  function.
  
  In order to see that sequence $( \delta)$ satisfies the Lucas congruences as
  well, it suffices to observe that $L ( n, k) = \binom{n + k}{k}$ is almost a
  {\textbf{DLP}} function, that is, it satisfies the congruences $(
  \ref{eq:dlp})$ but does not vanish for $k > n$. This is enough to conclude
  from Lemma~\ref{lem:dlp2} that
  \begin{equation*}
    L ( n, k) = 3^{n - 3 k} \binom{n}{3 k} \binom{n + k}{k} \frac{( 3 k)
     !}{k!^3}
  \end{equation*}
  is a {\textbf{DLP}} function. Since this is the summand of sequence $(
  \delta)$, the desired Lucas congruences again follow from
  \eqref{eq:dlp:sum}.
  
  On the other hand, for sequence $( \zeta)$, we observe that
  \begin{equation*}
    L ( n, k) = \sum_{j = 0}^n \binom{n}{j} \binom{k}{j} \binom{k + j}{n}
  \end{equation*}
  satisfies the congruences $( \ref{eq:dlp})$ by Lemma~\ref{lem:tlp:sum}
  because the summand is a {\textbf{TLP}} function by Lemma~\ref{lem:tlp1}.
  Hence, $\binom{n}{k}^2 L ( n, k)$ is a {\textbf{DLP}} function. Writing
  sequence $( \zeta)$ as
  \begin{equation*}
    A_{\zeta} ( n) = \sum_{k = 0}^n \binom{n}{k}^2 L ( n, k),
  \end{equation*}
  the claimed congruences once more follow from \eqref{eq:dlp:sum}.
\end{proof}

\section{Proofs for the two remaining sequences}\label{sec:lucas2}

The proof of the Lucas congruences in the previous section does not readily
extend to the sequences $( \eta)$ and $s_{18}$ from Table~\ref{tbl:sporadic3},
because, in contrast to the other cases, the known binomial sums for these
sequences do not have the property that their summands satisfy the double
Lucas property. Let us first note that the binomial sums for $s_{18}$ and
sequence $(\eta)$, given in $( \ref{eq:s18})$ and Table~\ref{tbl:sporadic3},
can be simplified at the expense of working with binomial coefficients with
negative entries. Namely, we have
\begin{equation}
  s_{18} (n) = \sum^n_{k = 0} (- 1)^k \binom{n}{k} \binom{2 k}{k} \binom{2 (n
  - k)}{n - k} \binom{2 n - 3 k}{n} \label{eq:s18:s}
\end{equation}
and
\begin{equation}
  A_{\eta} (n) = \sum_{k = 0}^n (- 1)^k \binom{n}{k}^3 \binom{4 n - 5 k}{3 n},
  \label{eq:eta:s}
\end{equation}
where, as usual, for any integer $m \geq 0$ and any number $x$, we define
\begin{equation*}
  \binom{x}{m} = \frac{x (x - 1) \cdots (x - m + 1)}{m!} .
\end{equation*}
For instance, the equivalence between \eqref{eq:s18} and \eqref{eq:s18:s} is a
simple consequence of the fact that, for integers $n \geq 0$ and $l = n -
k$,
\begin{equation}
  (- 1)^k \binom{2 n - 3 k}{n} = (- 1)^{k + n} \binom{- n + 3 k - 1}{n} = (-
  1)^l \binom{2 n - 3 l - 1}{n} . \label{eq:binom:neg23}
\end{equation}
For the first equality, we used that, for integers $b \geq 0$,
\begin{align}
  \binom{a}{b} & = \frac{a (a - 1) \cdots (a - b + 1)}{b!} \nonumber\\
  & = (- 1)^b  \frac{(- a) (- a + 1) \cdots (- a + b - 1)}{b!} = ( - 1)^b
  \binom{- a + b - 1}{b} .  \label{eq:binom:neg}
\end{align}
The following result generalizes the Lucas congruences for the sequence
$s_{18} (n)$.

\begin{theorem}
  \label{thm:s18:x:lucas}Suppose that $B (n, k)$ is a {\textbf{DLP}}
  function with the property that $B (n, k) = B (n, n - k)$. Then, the
  sequence
  \begin{equation*}
    A (n) = \sum^n_{k = 0} (- 1)^k B (n, k) \binom{2 n - 3 k}{n}
  \end{equation*}
  is an {\textbf{LP}} function, that is, $A ( n)$ satisfy the Lucas
  congruences \eqref{eq:lucas}.
\end{theorem}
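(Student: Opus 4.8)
The plan is to reduce, by the standard induction on $n$ (the case $n < p$ being trivial), to proving the one-step Lucas congruence
\[
  A(n_0 + n' p) \equiv A(n_0)\, A(n') \pmod{p}
\]
for every prime $p$, every $0 \le n_0 < p$, and every $n' \ge 0$. So fix $p, n_0, n'$ and set $n = n_0 + n' p$. Writing $k = k_0 + k' p$ with $0 \le k_0 < p$, and using that $B$ is a \textbf{DLP} function, so $B(n,k) \equiv B(n_0,k_0) B(n',k') \pmod{p}$, together with $(-1)^k = (-1)^{k_0} \big((-1)^{k'}\big)^p \equiv (-1)^{k_0} (-1)^{k'} \pmod{p}$, I would first rewrite
\[
  A(n) \equiv \sum_{k_0, k' \ge 0} (-1)^{k_0} (-1)^{k'} B(n_0, k_0) B(n', k') \binom{2 n - 3 k}{n} \pmod{p},
\]
noting that only the summands with $k_0 \le n_0$ and $k' \le n'$ are nonzero, since $B$ vanishes whenever its second argument exceeds its first.

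The key step is then to reduce $\binom{2n-3k}{n}$ modulo $p$. Writing $\binom{2n-3k}{n} = [x^n] (1+x)^{2n-3k}$ (a coefficient of a formal power series, since $2n - 3k$ may be negative) and using $(1+x)^p \equiv 1 + x^p \pmod{p}$ gives $\binom{2n-3k}{n} \equiv [x^{n_0}(x^p)^{n'}] (1+x)^{2n_0 - 3k_0} (1 + x^p)^{2n' - 3k'} \pmod{p}$. The elementary fact I would isolate is: for any integers $c,d$ with $0 \le c \le 2 n_0$,
\[
  \binom{c + d p}{n_0 + n' p} \equiv \binom{c}{n_0} \binom{d}{n'} \pmod{p},
\]
which follows by extracting $[x^{n_0}(x^p)^{n'}]$ from the product of the polynomial $(1+x)^c$ and the series $(1+x^p)^d$: the constraints $i + sp = n_0 + n'p$, $0 \le i \le c$, $s \ge 0$, together with $c \le 2n_0 < n_0 + p$, force $i = n_0$, $s = n'$. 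Applying this with $c = 2n_0 - 3 k_0$ in the range $2 n_0 - 3 k_0 \ge 0$ shows $\binom{2n-3k}{n} \equiv \binom{2n_0 - 3 k_0}{n_0}\binom{2n' - 3 k'}{n'} \pmod{p}$, so the part of the sum over these $k_0$ factors as $A^{\mathrm{I}}(n_0) \cdot A(n')$, where $A^{\mathrm{I}}(n_0) := \sum_{2n_0 - 3k_0 \ge 0} (-1)^{k_0} B(n_0, k_0) \binom{2 n_0 - 3 k_0}{n_0}$ and the second factor is, term for term, $A(n') = \sum_{k' \ge 0}(-1)^{k'}B(n',k')\binom{2n'-3k'}{n'}$.

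The remaining summands, those with $2 n_0 - 3 k_0 < 0$, are the crux and the reason this sequence falls outside the scope of Section~\ref{sec:lucas}: here the exponent $2n_0 - 3k_0$ of the lowest-digit factor is negative, the series $(1+x)^{2n_0 - 3k_0}$ is genuinely infinite, and no term of it can be discarded, so the elementary fact does not apply directly. To deal with them I would use both hypotheses at once. By the reflection identity \eqref{eq:binom:neg23}, $(-1)^k \binom{2n-3k}{n} = (-1)^{n-k} \binom{2n - 3(n-k) - 1}{n}$, and since $B(n,k) = B(n, n-k)$, each such summand equals $(-1)^{\bar k} B(n, \bar k) \binom{2n - 3\bar k - 1}{n}$ with $\bar k = n - k$. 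Because $2n_0 - 3k_0 < 0$ forces $3 \bar k_0 < n_0$ for the lowest digit $\bar k_0 = n_0 - k_0$, one now has $0 \le 2n_0 - 3 \bar k_0 - 1 \le 2 n_0$, so the elementary fact applies with $c = 2n_0 - 3\bar k_0 - 1$, and this block factors as $\big(\sum_{3 \bar k_0 < n_0} (-1)^{\bar k_0} B(n_0, \bar k_0) \binom{2n_0 - 3 \bar k_0 - 1}{n_0}\big) \cdot A(n')$. Using \eqref{eq:binom:neg23} and $B(n_0, \cdot) = B(n_0, n_0 - \cdot)$ once more — now at the level of $n_0$ — identifies the first factor with $A^{\mathrm{II}}(n_0) := \sum_{2 n_0 - 3 k_0 < 0} (-1)^{k_0} B(n_0, k_0) \binom{2n_0 - 3 k_0}{n_0}$, the complementary part of $A(n_0)$. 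Adding the two contributions gives $A(n) \equiv \big(A^{\mathrm{I}}(n_0) + A^{\mathrm{II}}(n_0)\big) A(n') = A(n_0) A(n') \pmod{p}$, completing the induction. I expect the main obstacle to be exactly this folding step: realizing that the ``bad'' half of the sum can be reflected, via \eqref{eq:binom:neg23} and the symmetry of $B$, onto terms whose upper binomial index is nonnegative, and that the resulting decomposition of $A(n_0)$ matches the split into the two ranges of $k_0$; the rest is the kind of generating-function bookkeeping already used in Section~\ref{sec:lucas}.
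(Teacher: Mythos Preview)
Your argument is correct and follows essentially the same route as the paper's proof: split on the sign of $2n_0 - 3k_0$, factor the ``good'' range directly via the generating-function identity, and handle the ``bad'' range by the reflection \eqref{eq:binom:neg23} together with the symmetry $B(n,\cdot) = B(n, n - \cdot)$. The only differences are organizational: the paper uses three ranges for $k_0$ (treating the middle range $n_0/3 < k_0 < 2n_0/3$, where $\binom{2n_0 - 3k_0}{n_0} = 0$, separately) and performs the reflection at the $n'$-level after factoring, whereas you fold that vanishing middle range into your Case~I and perform the reflection at the level of $n$ before factoring; the two are equivalent.
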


\begin{proof}
  Let $p$ be a prime and let $n \geq 0$ be an integer. Write $n = n_0 +
  n' p$ and $k = k_0 + k' p$, where $0 \leq n_0  < p$ and $0
  \leq k_0 < p$ and $n', k'$ are nonnegative integers. We have to show
  that
  \begin{equation}
    A (n) \equiv A (n_0) A (n') \pmod{p} .
    \label{eq:s18:lucas}
  \end{equation}
  In the sequel, we denote
  \begin{equation*}
    C (n, k) = (- 1)^k B (n, k) \binom{2 n - 3 k}{n} .
  \end{equation*}
  For $k_0 \leq n_0 / 3$, we have $2 n_0 - 3 k_0 \geq n_0 \geq
  0$ and $2 n_0 - 3 k_0 \leq 2 n_0 < n_0 + p$. Hence, by the usual
  argument, we have
  \begin{eqnarray*}
    \binom{2 n - 3 k}{n} & \equiv & [ x^{n_0} ( x^p)^{n'}] ( 1 + x)^{2 n_0 - 3
    k_0} ( 1 + x^p)^{2 n' - 3 k'} \pmod{p}\\
    & \equiv & \binom{2 n_0 - 3 k_0}{n_0} \binom{2 n' - 3 k'}{n'}
    \pmod{p} .
  \end{eqnarray*}
  Hence, we find that, when $k_0 \leq n_0 / 3$,
  \begin{equation}
    C (n, k) \equiv C (n_0, k_0) C (n', k') \pmod{p} .
    \label{eq:s18:Cmod1}
  \end{equation}
  For $n_0 / 3 < k_0 < 2 n_0 / 3$, we have $n_0 > 2 n_0 - 3 k_0 > 0 $.
  By the same argument as above, we find that
  \begin{equation}
    \binom{2 n - 3 k}{n} \equiv 0 \pmod{p},
    \label{eq:s18:Cmod2}
  \end{equation}
  and hence $C (n, k) \equiv C (n_0, k_0) \equiv 0$ modulo $p$.
  
  Finally, consider the case $n_0 \geq 1$ and $2 n_0 / 3 \leq k_0
  \leq n_0$. In that case, $- p < - n_0 \leq 2 n_0 - 3 k_0 \leq
  0$ or, equivalently, $0 < 2 n_0 - 3 k_0 + p \leq p$. Hence, we have,
  modulo $p$,
  \begin{eqnarray}
    \binom{2 n - 3 k}{n} & \equiv & [ x^{n_0} ( x^p)^{n'}] ( 1 + x)^{2 n_0 - 3
    k_0 + p} ( 1 + x^p)^{2 n' - 3 k' - 1} \nonumber\\
    & \equiv & \binom{2 n_0 - 3 k_0 + p}{n_0} \binom{2 n' - 3 k' - 1}{n'}
    \nonumber\\
    & \equiv & \binom{2 n_0 - 3 k_0}{n_0} \binom{2 n' - 3 k' - 1}{n'}, 
    \label{eq:bin23:3}
  \end{eqnarray}
  because, for any integers $A, B$ and $m$ such that $0 \leq m < p$,
  \begin{eqnarray}
    \binom{A + B p}{m} & = & \frac{1}{m!}  (A + B p) (A + B p - 1) \cdots (A +
    B p - m + 1) \nonumber\\
    & \equiv & \frac{1}{m!} A (A - 1) \cdots (A - m + 1) = \binom{A}{m}
    \pmod{p} .  \label{eq:binom:cancelp}
  \end{eqnarray}
  Set $l' = n' - k'$. Applying \eqref{eq:binom:neg23} to the second binomial
  factor in \eqref{eq:bin23:3}, we find that
  \begin{eqnarray*}
    \binom{2 n - 3 k}{n} & \equiv & (- 1)^{n'} \binom{2 n_0 - 3 k_0}{n_0}
    \binom{2 n' - 3 l'}{n'} \pmod{p} .
  \end{eqnarray*}
  In combination with the assumed symmetry of $B (n, k)$, we therefore have
  that, when $n_0 \geq 1$ and $2 n_0 / 3 \leq k_0 \leq n_0$,
  \begin{equation}
    C (n, k) \equiv C (n_0, k_0) C (n', n' - k') \pmod{p} .
    \label{eq:s18:Cmod3}
  \end{equation}
  We are now ready to combine all cases. First, suppose that $n_0 \geq
  1$. Noting that $k \leq n / 3$ implies $k' \leq n' / 3$, and using
  \eqref{eq:s18:Cmod1}, \eqref{eq:s18:Cmod2} and \eqref{eq:s18:Cmod3}, we
  conclude that, modulo $p$,
  \begin{align*}
    A (n) & = \sum^{p - 1}_{k_0 = 0} \sum_{k' = 0}^{n'} C (n, k) \equiv
    \sum^{n_0}_{k_0 = 0} \sum_{k' = 0}^{n'} C (n, k)\\
    & \equiv \sum^{\lfloor n_0 / 3 \rfloor}_{k_0 = 0} \sum_{k' = 0}^{n'} C
    (n, k) + \sum^{n_0}_{k_0 = \lceil 2 n_0 / 3 \rceil} \sum_{k' = 0}^{n'} C
    (n, k)\\
    & \equiv \sum^{\lfloor n_0 / 3 \rfloor}_{k_0 = 0} C (n_0, k_0) \sum_{k'
    = 0}^{n'} C (n', k') + \sum^{n_0}_{k_0 = \lceil 2 n_0 / 3 \rceil} C (n_0,
    k_0) \sum_{k' = 0}^{n'} C (n', n' - k')\\
    & = \left[ \sum^{\lfloor n_0 / 3 \rfloor}_{k_0 = 0} C (n_0, k_0) +
    \sum^{n_0}_{k_0 = \lceil 2 n_0 / 3 \rceil} C (n_0, k_0) \right] \sum_{k' =
    0}^{n'} C (n', k')\\
    & = A (n_0) A (n'),
  \end{align*}
  which is what we wanted to prove. The case $n_0 = 0$ is simpler, and we only
  have to use \eqref{eq:s18:Cmod1} to again conclude that \eqref{eq:s18:lucas}
  holds.
\end{proof}

\begin{corollary}
  \label{cor:lucas:s18}The sequence $s_{18} (n)$ satisfies the Lucas
  congruences \eqref{eq:lucas}.
\end{corollary}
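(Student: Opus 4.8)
The plan is to obtain this as a direct application of Theorem~\ref{thm:s18:x:lucas} to the simplified binomial sum \eqref{eq:s18:s}. Setting
\begin{equation*}
  B ( n, k) = \binom{n}{k} \binom{2 k}{k} \binom{2 ( n - k)}{n - k},
\end{equation*}
formula \eqref{eq:s18:s} takes the form $s_{18} ( n) = \sum_{k = 0}^n (- 1)^k B ( n, k) \binom{2 n - 3 k}{n}$, which is exactly the shape covered by Theorem~\ref{thm:s18:x:lucas}. It therefore remains only to verify the two hypotheses of that theorem: the symmetry $B ( n, k) = B ( n, n - k)$, and that $B ( n, k)$ is a {\textbf{DLP}} function.

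The symmetry is immediate, since replacing $k$ by $n - k$ fixes $\binom{n}{k}$ and swaps the two central binomial factors. For the {\textbf{DLP}} property, note first that $B ( n, k) = 0$ whenever $k > n$, because of the factor $\binom{n}{k}$. Now fix a prime $p$ and write $n = n_0 + n' p$ and $k = k_0 + k' p$ with $0 \leq n_0, k_0 < p$ and $n', k' \geq 0$; we must show $B ( n, k) \equiv B ( n_0, k_0) B ( n', k') \pmod{p}$. If $k_0 > n_0$, then $\binom{n}{k} \equiv \binom{n_0}{k_0} \binom{n'}{k'} \equiv 0 \pmod{p}$ by \eqref{eq:lucas:bin}, and both sides of the desired congruence vanish (the right-hand side because $\binom{n_0}{k_0} = 0$). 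If $k_0 \leq n_0$ but $k' > n'$, then $k \geq k' p > n' p + n_0 \geq n$, so again $\binom{n}{k} = 0$ and $\binom{n'}{k'} = 0$, and both sides vanish. In the remaining case $k_0 \leq n_0$ and $k' \leq n'$, no carries occur, so that $n - k = ( n_0 - k_0) + ( n' - k') p$ with $0 \leq n_0 - k_0 < p$; applying \eqref{eq:lucas:bin} to $\binom{n}{k}$ and the Lucas property of the central binomial coefficients (established via \eqref{eq:cenbin} in the proof of Lemma~\ref{lem:dlp2}) to both $\binom{2 k}{k}$ and $\binom{2 ( n - k)}{n - k}$, each of the three factors of $B ( n, k)$ splits into the corresponding product, and multiplying the three congruences gives the claim. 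Hence $B ( n, k)$ is a {\textbf{DLP}} function, and Theorem~\ref{thm:s18:x:lucas} yields that $s_{18} ( n)$ satisfies the Lucas congruences \eqref{eq:lucas}.

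I do not expect any genuine obstacle here: every ingredient --- Lucas's theorem for binomial coefficients and the Lucas property of central binomials --- is already available, and the entire content is the bookkeeping of the case distinctions above. The one point deserving a moment's attention is the factor $\binom{2 ( n - k)}{n - k}$, whose $p$-adic digit expansion only splits cleanly when $k \leq n$ at the level of individual digits; this is precisely why the argument keeps track of the relative sizes of $k_0, n_0$ and of $k', n'$.
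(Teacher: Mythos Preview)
Your proof is correct and follows essentially the same route as the paper: you take $B(n,k)=\binom{n}{k}\binom{2k}{k}\binom{2(n-k)}{n-k}$, verify its symmetry and the \textbf{DLP} property, and then apply Theorem~\ref{thm:s18:x:lucas} to the representation~\eqref{eq:s18:s}. The only difference is cosmetic: the paper simply cites the discussion in Section~\ref{sec:lucas} for the \textbf{DLP} claim, whereas you spell out the digit-by-digit case analysis explicitly.
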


\begin{proof}
  Recall from the discussion in Section~\ref{sec:lucas} that
  \begin{equation*}
    B (n, k) = \binom{n}{k} \binom{2 k}{k} \binom{2 (n - k)}{n - k}
  \end{equation*}
  is a {\textbf{DLP}} function. Obviously, $B (n, k) = B (n, n - k)$. Hence,
  Theorem~\ref{thm:s18:x:lucas} applies to show that $s_{18} (n)$, in the form
  \eqref{eq:s18:s} satisfies the Lucas congruences \eqref{eq:lucas}.
\end{proof}

Next, we prove that the sequence $(\eta)$, which corresponds to the choice $a
= 3$ in Theorem~\ref{thm:lucas:eta}, satisfies Lucas congruences as well.

\begin{theorem}
  \label{thm:lucas:eta}Let $a \in \{1, 3\}$. Then, the sequence
  \begin{equation}
    A (n) = \sum^n_{k = 0} (- 1)^k \binom{n}{k}^a \binom{4 n - 5 k}{3 n}
    \label{eq:eta:a}
  \end{equation}
  is an {\textbf{LP}} function, that is, $A ( n)$ satisfy the Lucas
  congruences \eqref{eq:lucas}.
\end{theorem}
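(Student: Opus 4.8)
The plan is to follow the template of the proof of Theorem~\ref{thm:s18:x:lucas}, reducing everything to a one-step Lucas congruence and then controlling the ``hard'' binomial coefficient by coefficient extraction; what makes the present case genuinely harder is that the target index $3n$ in $\binom{4n-5k}{3n}$ can carry past a multiple of $p$, which it never does in the $s_{18}$ situation. Concretely, fix a prime $p$ and write $n = n_0 + n'p$, $k = k_0 + k'p$ with $0 \le n_0, k_0 < p$. It suffices to prove $A(n) \equiv A(n_0) A(n') \pmod p$. Since $\binom{n}{k}^a$ is a {\textbf{DLP}} function (the $m = 0$ case of \eqref{eq:lucas:binx}) and $(-1)^k \equiv (-1)^{k_0}(-1)^{k'} \pmod p$ (trivially when $p = 2$), the entire difficulty is to understand the reduction of $\binom{4n-5k}{3n}$ modulo $p$.

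For the key congruence I would write $\binom{4n-5k}{3n} = [x^{3n}](1+x)^{4n-5k}$, with $(1+x)^m$ for $m < 0$ read as a formal power series, so that reduction modulo $p$ gives $(1+x)^{4n-5k} \equiv (1+x)^{4n_0-5k_0}(1+x^p)^{4n'-5k'}$. Extracting the coefficient of $x^{3n_0+3n'p}$, I would split $3n_0 = \rho + \sigma p$ with $\rho = 3n_0 \bmod p$ and $\sigma = \lfloor 3n_0/p\rfloor \in \{0,1,2\}$; the coefficient becomes a sum of $\binom{4n_0-5k_0}{\rho+i_1 p}\binom{4n'-5k'}{3n'+\sigma-i_1}$ over $i_1 \ge 0$. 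Applying Lucas's congruence \eqref{eq:lucas:bin} to the first factor (after writing $4n_0-5k_0 = e_0 + e_1 p$ when it is nonnegative) and Vandermonde's identity to collapse the sum over $i_1$, this should yield, in the ``generic'' range, a clean identity of the shape
\[
  \binom{4n-5k}{3n} \equiv \binom{e_0}{\rho}\binom{4n'-5k'+e_1}{3n'+\sigma} \pmod p,
\]
while the ranges with $4n_0 - 5k_0 < 0$ are handled by first applying the reflection \eqref{eq:binom:neg} to restore a nonnegative top.

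The bulk of the proof is then to partition the range $0 \le k_0 \le n_0$ (outside which $\binom{n}{k}$ already vanishes modulo $p$) into bands according to the position of $4n_0 - 5k_0$ relative to $0$, to $3n_0$, and to the nearby multiples of $p$, and simultaneously to distinguish the three possibilities $3n_0 < p$, $p \le 3n_0 < 2p$, $2p \le 3n_0 < 3p$ governing $\sigma$. On each band one shows that the $k_0$-dependent factor reproduces the $k_0$-th summand of $A(n_0)$ — using, exactly as in the middle case of Theorem~\ref{thm:s18:x:lucas}, that $\binom{4n_0-5k_0}{3n_0}$ vanishes on the ``empty'' band $n_0/5 < k_0 \le 4n_0/5$ — while the $k'$-dependent factor, after another application of \eqref{eq:binom:neg} that trades $\binom{4n'-5k'+e_1}{3n'+\sigma}$ for a binomial with nonnegative top at the cost of replacing $k'$ by $n'-k'$, reproduces the corresponding summand of $A(n')$ (or its reversal, via $\binom{n'}{k'} = \binom{n'}{n'-k'}$). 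Summing over $k_0$ and $k'$ and reassembling the pieces then gives $A(n) \equiv A(n_0) A(n')$; the argument is uniform in $a \in \{1,3\}$ since only the {\textbf{DLP}} property of $\binom{n}{k}^a$ is used.

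The main obstacle — and the reason this does not follow from the machinery of Section~\ref{sec:lucas} — is precisely the bookkeeping forced by the carry $\sigma$. Unlike the $s_{18}$ proof, where $[x^n]$ is extracted with $n_0 < p$ so that no carry occurs and three bands of $k_0$ suffice, here $3n_0$ may exceed $p$, so the interior and boundary bands of $k_0$ have to be crossed with the three values of $\sigma$; in the mixed bands the naive product acquires a spurious factor of the form $\binom{e_1}{\sigma}$, together with ``off-diagonal'' Vandermonde contributions $\binom{4n'-5k'+e_1}{3n'+\sigma}$ with $\sigma \ne e_1$, which must be disposed of — either by checking that the relevant binomials are divisible by $p$ on that band, or by grouping adjacent values of $k_0$ so that a telescoping cancellation removes them. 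Enumerating these bands and verifying in each that the contribution is exactly the expected term of $A(n_0)A(n')$ is where essentially all the work lies.
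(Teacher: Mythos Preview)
Your overall strategy --- one-step Lucas, coefficient extraction for $\binom{4n-5k}{3n}$, banding $k_0$ against the position of $4n_0-5k_0$, and casing on the carry $\sigma=\lfloor 3n_0/p\rfloor\in\{0,1,2\}$ --- is exactly the route the paper takes. For $\sigma=0$ the middle band vanishes termwise, and for $\sigma=1$ the middle-band contributions do cancel in pairs $(k_0,k')\leftrightarrow(k_0,n'-k')$ as you anticipate. The paper also needs, and proves, the small identities
\[
A(n)=\sum_{k=0}^n(-1)^k\binom{n}{k}^a\binom{4n-5k+d}{3n+d}\qquad(d\in\{0,1\})
\]
to recognise the shifted $k'$-sums as $A(n')$; you would have to discover these, but they follow from the reflection \eqref{eq:binom:neg} and pairing $k\leftrightarrow n-k$.

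The genuine gap is the case $\sigma=2$ (i.e.\ $2p\le 3n_0<3p$). Here the residual contribution does \emph{not} disappear by any band-local vanishing, pairing, or telescope; after all the bookkeeping one is left with having to prove
\[
\sum_{k=0}^{n_0}(-1)^k\binom{n_0}{k}^a\binom{4n_0-5k}{3n_0-2p}\equiv 0\pmod{p}
\]
for $2p/3\le n_0<p$. This is a global vanishing statement about a single (unshifted) sum, and the paper proves it as a separate lemma by Calkin's Pochhammer technique: rewrite the summand in terms of $(k+1)_{r-1}^a$ times a falling factorial, expand in the basis $(k+r)_j$, and sum in $k$ to get quantities $(p-r+1)_{r+j}/(r+j)$, all divisible by $p$ precisely when $r+N<p$, which forces $a\le 3$. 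Your proposal's ``dispose of the off-diagonal pieces by divisibility or grouping adjacent $k_0$'' does not cover this: the obstruction is not a boundary term but an entire sum over $k_0$, and no local cancellation removes it.

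This also explains why your claim that the argument is ``uniform in $a\in\{1,3\}$ since only the \textbf{DLP} property of $\binom{n}{k}^a$ is used'' cannot be right. If the proof used nothing beyond the \textbf{DLP} property, it would go through for every $a\ge 1$, whereas the sequence fails the Lucas congruences already for $a=2$. The restriction $a\in\{1,3\}$ enters exactly at the $\sigma=2$ step: one needs $a$ odd so that $(-1)^k=(-1)^{ak}$ matches the sign in the auxiliary lemma, and one needs $a\le 3$ for the Calkin-type bound to hold. You should isolate this sum explicitly and prove it vanishes modulo $p$ as a standalone lemma; without it, the $\sigma=2$ branch of your plan does not close.
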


\begin{proof}
  Let $p$ be a prime and let $n \geq 0$ be an integer. As in the proof of
  Theorem~\ref{thm:s18:x:lucas}, we write $n = n_0 + n' p$ and $k = k_0 + k'
  p$, where $0 \leq n_0  < p$ and $0 \leq k_0 < p$ and $n',
  k'$ are nonnegative integers. Again, we have to show that
  \begin{equation}
    A (n) \equiv A (n_0) A (n') \pmod{p} .
    \label{eq:eta:lucas}
  \end{equation}
  Throughout the proof, let $d = \lfloor 3 n_0 / p \rfloor$.
  
  If $k_0 \leq n_0 / 5$, then $4 n_0 - 5 k_0 \geq 3 n_0 \geq 0$
  and $4 n_0 - 5 k_0 \leq 4 n_0 < 3 n_0 + p$. Since $d = \lfloor 3 n_0 /
  p \rfloor$, we thus have $0 \leq 3 n_0 - d p < p$ and $0 \leq 4
  n_0 - 5 k_0 - d p < (3 n_0 - d p) + p$. Therefore, modulo $p$,
  \begin{eqnarray*}
    \binom{4 n - 5 k}{3 n} & \equiv & [ x^{3 n_0 - d p} ( x^p)^{3 n' + d}] ( 1
    + x)^{4 n_0 - 5 k_0 - d p} ( 1 + x^p)^{4 n' - 5 k' + d}\\
    & \equiv & \binom{4 n_0 - 5 k_0 - d p}{3 n_0 - d p} \binom{4 n' - 5 k' +
    d}{3 n' + d}\\
    & \equiv & \binom{4 n_0 - 5 k_0}{3 n_0} \binom{4 n' - 5 k' + d}{3 n' +
    d},
  \end{eqnarray*}
  where in the last step we used that, modulo $p$,
  \begin{equation}
    \binom{4 n_0 - 5 k_0 - d p}{3 n_0 - d p} = \binom{4 n_0 - 5 k_0 - d p}{n_0
    - 5 k_0} \equiv \binom{4 n_0 - 5 k_0}{n_0 - 5 k_0} = \binom{4 n_0 - 5
    k_0}{3 n_0}, \label{eq:eta:bin45d}
  \end{equation}
  which follows from \eqref{eq:binom:cancelp} because $0 \leq n_0 - 5 k_0
  < p$. In particular, we have
  \begin{align}
    &\sum^{\lfloor n_0 / 5 \rfloor}_{k_0 = 0} \sum_{k' = 0}^{n'} (- 1)^k
    \binom{n}{k}^a \binom{4 n - 5 k}{3 n} \nonumber\\
    \equiv{} & \sum^{\lfloor n_0 / 5 \rfloor}_{k_0 = 0} (- 1)^{k_0}
    \binom{n_0}{k_0}^a \binom{4 n_0 - 5 k_0}{3 n_0} \sum_{k' = 0}^{n'} (-
    1)^{k'} \binom{n'}{k'}^a \binom{4 n' - 5 k' + d}{3 n' + d}, 
    \label{eq:eta:sum1}
  \end{align}
  and we observe that, for $d \in \{0, 1\}$,
  \begin{equation}
    A (n) = \sum_{k = 0}^n (- 1)^k \binom{n}{k}^a \binom{4 n - 5 k + d}{3 n +
    d} . \label{eq:eta:A1}
  \end{equation}
  To see this, note that the the sum of the $k$-th and $(n - k)$-th term does
  not depend on the value of $d \in \{0, 1\}$. Indeed, using
  \eqref{eq:binom:neg}, Pascal's relation and \eqref{eq:binom:neg} again, we
  deduce that
  \begin{eqnarray*}
    &  & \binom{4 n - 5 k + 1}{3 n + 1} + (- 1)^n \binom{4 n - 5 (n - k) +
    1}{3 n + 1}\\
    & = & \binom{4 n - 5 k + 1}{3 n + 1} - \binom{4 n - 5 k - 1}{3 n + 1}\\
    & = & \left[ \binom{4 n - 5 k + 1}{3 n + 1} - \binom{4 n - 5 k}{3 n + 1}
    \right] + \left[ \binom{4 n - 5 k}{3 n + 1} - \binom{4 n - 5 k - 1}{3 n +
    1} \right]\\
    & = & \binom{4 n - 5 k}{3 n} + \binom{4 n - 5 k - 1}{3 n}\\
    & = & \binom{4 n - 5 k}{3 n} + (- 1)^n \binom{4 n - 5 (n - k)}{3 n} .
  \end{eqnarray*}
  Next, suppose that $n_0 \geq 1$ and $4 n_0 / 5 \leq k_0 \leq
  n_0$. In that case, $- p < - n_0 \leq 4 n_0 - 5 k_0 \leq 0$ or,
  equivalently, $0 < 4 n_0 - 5 k_0 + p \leq p$. Hence, we have, modulo
  $p$,
  \begin{eqnarray*}
    \binom{4 n - 5 k}{3 n} & \equiv & [ x^{3 n_0 - d p} ( x^p)^{3 n' + d}] ( 1
    + x)^{4 n_0 - 5 k_0 + p} ( 1 + x^p)^{4 n' - 5 k' - 1}\\
    & \equiv & \binom{4 n_0 - 5 k_0 + p}{3 n_0 - d p} \binom{4 n' - 5 k' -
    1}{3 n' + d} .
  \end{eqnarray*}
  We rewrite the first binomial factor as follows, applying first
  \eqref{eq:binom:neg} and then \eqref{eq:binom:cancelp} twice, to find that,
  with $l_0 = n_0 - k_0$, modulo $p$,
  \begin{eqnarray*}
    \binom{4 n_0 - 5 k_0 + p}{3 n_0 - d p} & = & (- 1)^{n_0 + d} \binom{4 n_0
    - 5 l_0 - (d + 1) p - 1}{3 n_0 - d p}\\
    & \equiv & (- 1)^{n_0 + d} \binom{4 n_0 - 5 l_0 - d p - 1}{3 n_0 - d p}\\
    & = & (- 1)^{n_0 + d} \binom{4 n_0 - 5 l_0 - d p - 1}{n_0 - 5 l_0 - 1}\\
    & \equiv & (- 1)^{n_0 + d} \binom{4 n_0 - 5 l_0 - 1}{n_0 - 5 l_0 - 1}\\
    & = & (- 1)^{n_0 + d} \binom{4 n_0 - 5 l_0 - 1}{3 n_0} .
  \end{eqnarray*}
  Here, we proceeded under the assumption that $n_0 - 5 l_0 > 0$. It is
  straightforward to check that the final congruence also holds when $n_0 = 5
  l_0$, because then the binomial coefficients vanish modulo $p$. We conclude
  that, when $n_0 \geq 1$ and $4 n_0 / 5 \leq k_0 \leq n_0$,
  \begin{equation*}
    (- 1)^k \binom{4 n - 5 k}{3 n} \equiv (- 1)^{l_0} \binom{4 n_0 - 5 l_0 -
     1}{3 n_0} (- 1)^{k' + d} \binom{4 n' - 5 k' - 1}{3 n' + d} \pmod{p} .
  \end{equation*}
  In particular, we have
  \begin{align}
    & \sum^{n_0}_{k_0 = \lceil 4 n_0 / 5 \rceil} \sum_{k' = 0}^{n'} (-
    1)^k \binom{n}{k}^a \binom{4 n - 5 k}{3 n} \nonumber\\
    \equiv{} & \sum^{n_0}_{k_0 = \lceil 4 n_0 / 5 \rceil} (- 1)^{l_0}
    \binom{n_0}{l_0}^a \binom{4 n_0 - 5 l_0 - 1}{3 n_0} \sum_{k' = 0}^{n'} (-
    1)^{k' + d} \binom{n'}{k'}^a \binom{4 n' - 5 k' - 1}{3 n' + d} \nonumber\\
    ={} & \sum^{\lfloor n_0 / 5 \rfloor}_{k_0 = 0} (- 1)^{k_0}
    \binom{n_0}{k_0}^a \binom{4 n_0 - 5 k_0 - 1}{3 n_0} \sum_{k' = 0}^{n'} (-
    1)^{k' + d} \binom{n'}{k'}^a \binom{4 n' - 5 k' - 1}{3 n' + d}, 
    \label{eq:eta:sum3}
  \end{align}
  and we observe that, for integers $d \geq 0$,
  \begin{equation*}
    \sum_{k = 0}^n (- 1)^{k + d} \binom{n}{k}^a \binom{4 n - 5 k - 1}{3 n +
     d} = \sum_{k = 0}^n (- 1)^k \binom{n}{k}^a \binom{4 n - 5 k + d}{3 n + d}
  \end{equation*}
  because, by \eqref{eq:binom:neg},
  \begin{equation*}
    (- 1)^k \binom{4 n - 5 k + d}{3 n + d} = (- 1)^{(n - k) + d} \binom{4 n -
     5 (n - k) - 1}{3 n + d} .
  \end{equation*}
  Therefore, we can combine \eqref{eq:eta:sum1} and \eqref{eq:eta:sum3} into
  \begin{eqnarray}
    &  & \sum^{n_0}_{\substack{
      k_0 = 0\\
      k_0 \leq n_0 / 5 \text{ or } k_0 \geq 4 n_0 / 5
    }} \sum_{k' = 0}^{n'} (- 1)^k \binom{n}{k}^a \binom{4 n - 5
    k}{3 n} \nonumber\\
    & \equiv & A (n_0) \sum_{k' = 0}^{n'} (- 1)^{k'} \binom{n'}{k'}^a
    \binom{4 n' - 5 k' + d}{3 n' + d} \pmod{p}, 
    \label{eq:eta:sum13}
  \end{eqnarray}
  which holds for all $0 \leq n_0 < p$ (recall from the discussion at the
  beginning of this section that $A (n_0)$, like sequence $(\eta)$, can be
  represented as in Table~\ref{tbl:sporadic3}).
  
  On the other hand, suppose that $n_0 / 5 < k_0 < 4 n_0 / 5$. Set $f =
  \lfloor (4 n_0 - 5 k_0) / p \rfloor$. Since $0 < 4 n_0 - 5 k_0 < 3 n_0 < 3
  p$, we have $f \in \{0, 1, 2\}$. The usual arguments show that, modulo $p$,
  \begin{eqnarray}
    \binom{4 n - 5 k}{3 n} & \equiv & [ x^{3 n_0 - d p} ( x^p)^{3 n' + d}] ( 1
    + x)^{4 n_0 - 5 k_0 - f p} ( 1 + x^p)^{4 n' - 5 k' + f} \nonumber\\
    & \equiv & \binom{4 n_0 - 5 k_0 - f p}{3 n_0 - d p} \binom{4 n' - 5 k' +
    f}{3 n' + d} \nonumber\\
    & \equiv & \binom{4 n_0 - 5 k_0}{3 n_0 - d p} \binom{4 n' - 5 k' + f}{3
    n' + d} .  \label{eq:eta:bin2}
  \end{eqnarray}
  We are now in a position to begin piecing everything together. To do so, we
  consider individually the cases corresponding to the value of $d \in \{0, 1,
  2\}$.
  
  First, suppose $d = 0$ or $d = 1$. Congruence \eqref{eq:eta:sum13} coupled
  with \eqref{eq:eta:A1} implies that
  \begin{equation*}
    \sum^{n_0}_{\substack{
       k_0 = 0\\
       k_0 \leq n_0 / 5 \text{ or } k_0 \geq 4 n_0 / 5
     }} \sum_{k' = 0}^{n'} (- 1)^k \binom{n}{k}^a \binom{4 n - 5
     k}{3 n} \equiv A (n_0) A (n') \pmod{p} .
  \end{equation*}
  To conclude the desired congruence \eqref{eq:eta:lucas}, it therefore only
  remains to show that
  \begin{equation}
    \sum^{\lceil 4 n_0 / 5 \rceil - 1}_{k_0 = \lfloor n_0 / 5 \rfloor + 1}
    \sum_{k' = 0}^{n'} (- 1)^k \binom{n}{k}^a \binom{4 n - 5 k}{3 n} \equiv 0
    \pmod{p} . \label{eq:eta:sum2}
  \end{equation}
  This is easily seen in the case $d = 0$, because then each term of this sum
  vanishes modulo $p$. Equivalently, for $d = 0$, \eqref{eq:eta:bin2} vanishes
  whenever $n_0 / 5 < k_0 < 4 n_0 / 5$ (because $0 \leq 4 n_0 - 5 k_0 - f
  p \leq 4 n_0 - 5 k_0 < 3 n_0$). On the other hand, if $d = 1$, we claim
  that the sum \eqref{eq:eta:sum2} vanishes modulo $p$ because the terms
  corresponding to $(k_0, k')$ and $(k_0, n' - k')$ cancel each other. To see
  that, observe first that, for $d = 1$, \eqref{eq:eta:bin2} vanishes whenever
  $n_0 / 5 < k_0 < 4 n_0 / 5$ and $f = \lfloor (4 n_0 - 5 k_0) / p \rfloor
  \neq 0$ (because $0 \leq 4 n_0 - 5 k_0 - f p \leq 4 n_0 - 5 k_0 -
  p < 3 n_0 - p$ if $f \in \{1, 2\}$). Therefore, for the term corresponding
  to $(k_0, k')$,
  \begin{equation*}
    (- 1)^k \binom{4 n - 5 k}{3 n} \equiv (- 1)^{k_0} \binom{4 n_0 - 5 k_0}{3
     n_0 - p} (- 1)^{k'} \binom{4 n' - 5 k'}{3 n' + 1} \pmod{p},
  \end{equation*}
  while, for the term corresponding to $(k_0, n' - k')$ with $j = k_0 + (n' -
  k') p$,
  \begin{eqnarray*}
    (- 1)^j \binom{4 n - 5 j}{3 n} & \equiv & (- 1)^{k_0} \binom{4 n_0 - 5
    k_0}{3 n_0 - p} (- 1)^{n' - k'} \binom{4 n' - 5 (n' - k')}{3 n' + 1}\\
    & \equiv & (- 1)^{k_0} \binom{4 n_0 - 5 k_0}{3 n_0 - p} (- 1)^{k' + 1}
    \binom{4 n' - 5 k'}{3 n' + 1}\\
    & \equiv & - (- 1)^k \binom{4 n - 5 k}{3 n} \pmod{p},
  \end{eqnarray*}
  where we applied \eqref{eq:binom:neg} for the second congruence. It is now
  immediate to see that the sum \eqref{eq:eta:sum2} indeed vanishes modulo $p$
  for $d = 1$.
  
  It remains to prove the Lucas congruences \eqref{eq:eta:lucas} in the case
  $d = 2$. Using \eqref{eq:eta:sum13}, we have
  \begin{eqnarray*}
    A (n) & \equiv & A (n_0) \sum_{k' = 0}^{n'} (- 1)^{k'} \binom{n'}{k'}^a
    \binom{4 n' - 5 k' + 2}{3 n' + 2} + M \pmod{p},
  \end{eqnarray*}
  where
  \begin{equation*}
    M \assign \sum^{\lceil 4 n_0 / 5 \rceil - 1}_{k_0 = \lfloor n_0 / 5
     \rfloor + 1} \sum_{k' = 0}^{n'} (- 1)^k \binom{n}{k}^a \binom{4 n - 5
     k}{3 n} .
  \end{equation*}
  Combining this congruence with the identity
  \begin{equation*}
    A (n) = \sum_{k = 0}^n (- 1)^k \binom{n}{k}^a \left[ \binom{4 n - 5 k +
     2}{3 n + 2} - \binom{4 n - 5 k}{3 n + 2} \right],
  \end{equation*}
  which can be deduced along the same lines as \eqref{eq:eta:A1}, we find that
  \begin{equation}
    A (n) \equiv A (n_0) A (n') + A (n_0) \sum_{k' = 0}^{n'} (- 1)^{k'}
    \binom{n'}{k'}^a \binom{4 n' - 5 k'}{3 n' + 2} + M \pmod{p} . \label{eq:eta:A:d2}
  \end{equation}
  We have, by \eqref{eq:eta:bin2}, modulo $p$,
  \begin{align*}
    M & \equiv \sum^{\lceil 4 n_0 / 5 \rceil - 1}_{k_0 = \lfloor n_0 / 5
    \rfloor + 1} (- 1)^{k_0} \binom{n_0}{k_0}^a \binom{4 n_0 - 5 k_0}{3 n_0 -
    2 p} \sum_{k' = 0}^{n'} (- 1)^{k'} \binom{n'}{k'}^a \binom{4 n' - 5 k' +
    f}{3 n' + 2}\\
    & \equiv \sum^{\lceil 4 n_0 / 5 \rceil - 1}_{k_0 = \lfloor n_0 / 5
    \rfloor + 1} (- 1)^{k_0} \binom{n_0}{k_0}^a \binom{4 n_0 - 5 k_0}{3 n_0 -
    2 p} \sum_{k' = 0}^{n'} (- 1)^{k'} \binom{n'}{k'}^a \binom{4 n' - 5 k'}{3
    n' + 2},
  \end{align*}
  where the last congruence is a consequence of the identity
  \begin{equation*}
    \sum_{k = 0}^n (- 1)^k \binom{n}{k}^a \binom{4 n - 5 k + 1}{3 n + 2} =
     \sum_{k = 0}^n (- 1)^k \binom{n}{k}^a \binom{4 n - 5 k}{3 n + 2}
  \end{equation*}
  (which follows from \eqref{eq:binom:neg} and replacing $k$ with $n - k$) and
  the fact that \eqref{eq:eta:bin2} vanishes for $n_0 / 5 < k_0 < 4 n_0 / 5$
  if $f = 2$. Using this value of $M$ in \eqref{eq:eta:A:d2}, we find that the
  desired Lucas congruence \eqref{eq:eta:lucas} follows, if we can show that
  \begin{equation}
    A (n_0) + \sum^{\lceil 4 n_0 / 5 \rceil - 1}_{k_0 = \lfloor n_0 / 5
    \rfloor + 1} (- 1)^{k_0} \binom{n_0}{k_0}^a \binom{4 n_0 - 5 k_0}{3 n_0 -
    2 p} \equiv 0 \pmod{p} . \label{eq:eta:M0}
  \end{equation}
  Note that, if $k_0 \leq n_0 / 5$, then, by \eqref{eq:binom:cancelp} and
  \eqref{eq:eta:bin45d},
  \begin{equation}
    \binom{4 n_0 - 5 k_0}{3 n_0 - 2 p} \equiv \binom{4 n_0 - 5 k_0 - 2 p}{3
    n_0 - 2 p} \equiv \binom{4 n_0 - 5 k_0}{3 n_0} \pmod{p}
    . \label{eq:eta:bin45d2}
  \end{equation}
  A similar argument, combined with \eqref{eq:binom:neg}, shows that the
  congruence \eqref{eq:eta:bin45d2} also holds if $k_0 \geq 4 n_0 / 5$.
  We therefore find that \eqref{eq:eta:M0} is equivalent to
  \begin{equation*}
    \sum^{n_0}_{k_0 = 0} (- 1)^{k_0} \binom{n_0}{k_0}^a \binom{4 n_0 - 5
     k_0}{3 n_0 - 2 p} \equiv 0 \pmod{p} .
  \end{equation*}
  The next lemma proves that this congruence indeed holds provided that $a \in
  \{1, 3\}$.
\end{proof}

\begin{lemma}
  \label{lem:lucas:eta:0}Let $p$ be a prime, and $a \in \{1, 2, 3\}$. Then we
  have, for all $n$ such that $2 p / 3 \leq n < p$,
  \begin{equation*}
    \sum_{k = 0}^n (- 1)^{a k} \binom{n}{k}^a \binom{4 n - 5 k}{3 n - 2 p}
     \equiv 0 \pmod{p} .
  \end{equation*}
\end{lemma}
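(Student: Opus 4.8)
The plan is to write $n = p - e$, so that the hypothesis $2 p / 3 \leq n < p$ becomes $1 \leq e \leq \lfloor p / 3 \rfloor$ (hence $0 \leq p - 3 e \leq p - 2$ and $0 \leq 3 e - 1 \leq p - 2$), to reduce each of the two factors of the summand modulo $p$ to an expression depending on $k$ only through $k \bmod p$, and then to reindex the sum so that it takes the shape of an alternating binomial sum $\sum_j ( - 1)^j \binom{3 e - 1}{j - 1} R ( j)$ with $R$ a polynomial of degree strictly less than $3 e - 1$; such sums vanish. We may assume $p \geq 7$, since for $p = 2$ the hypothesis is vacuous, and for $p = 3$ (forcing $n = 2$) and $p = 5$ (forcing $n = 4$) the congruence is verified directly.

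The first step is to reduce $\binom{4 n - 5 k}{3 n - 2 p}$ modulo $p$. Writing it as $[ z^{3 n - 2 p}] ( 1 + z)^{4 n - 5 k}$, using $( 1 + z)^p \equiv 1 + z^p \pmod{p}$, and substituting $4 n - 5 k = 4 p - 4 e - 5 k$ and $3 n - 2 p = p - 3 e$, the inequality $0 \leq p - 3 e < p$ allows only one term to survive, so that $\binom{4 n - 5 k}{3 n - 2 p} \equiv \binom{- 4 e - 5 k}{p - 3 e} \pmod{p}$; and writing $- 4 e - 5 k = - j - m p$ with $j \assign ( 4 e + 5 k) \bmod p$ and $m \geq 0$, the same reasoning further reduces this to $\binom{- j}{p - 3 e}$. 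A short computation using \eqref{eq:binom:neg} and the Lucas congruences \eqref{eq:lucas:bin} then shows that $\binom{4 n - 5 k}{3 n - 2 p} \equiv ( - 1)^{e + j} \binom{3 e - 1}{j - 1} \pmod{p}$ when $1 \leq j \leq 3 e$, and $\equiv 0 \pmod p$ otherwise. The second step reduces the binomial powers: from $( 1 + t)^{p - e} \equiv ( 1 + t^p) ( 1 + t)^{- e} \pmod{p}$ one obtains $\binom{n}{k} \equiv ( - 1)^k \binom{k + e - 1}{e - 1} \pmod{p}$ for $0 \leq k < p$, whence $( - 1)^{a k} \binom{n}{k}^a \equiv \binom{k + e - 1}{e - 1}^a \pmod{p}$.

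Combining the two reductions, noting that the summand vanishes for $k > n$ so that the range may be taken to be $0 \leq k \leq p - 1$, and using that $k \mapsto ( 4 e + 5 k) \bmod p$ is a bijection of $\mathbb{Z} / p \mathbb{Z}$ (this is where $p \neq 5$ enters), the sum of the lemma becomes
\begin{equation*}
  ( - 1)^e \sum_{j = 1}^{3 e} ( - 1)^j \binom{3 e - 1}{j - 1} \binom{k_j + e - 1}{e - 1}^a \pmod{p} ,
\end{equation*}
where $k_j$ is determined by $5 k_j \equiv j - 4 e \pmod{p}$. Now $\binom{x}{e - 1}$ reduces modulo $p$ to a polynomial in $x$ of degree $e - 1$ (as $( e - 1) ! \not\equiv 0$, since $e - 1 < p$), while $k_j + e - 1 \equiv 5^{- 1} ( j + e - 5) \pmod{p}$ is affine in $j$; hence, for $1 \leq j \leq 3 e$, the factor $\binom{k_j + e - 1}{e - 1}^a$ equals $R ( j)$ for some polynomial $R$ over $\mathbb{F}_p$ of degree at most $a ( e - 1)$. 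Since $a \leq 3$ forces $a ( e - 1) \leq 3 e - 3 < 3 e - 1$, the displayed sum is, up to sign, a $( 3 e - 1)$-st finite difference of $R$ and therefore vanishes modulo $p$. This is the claimed congruence.

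The step I expect to be the main obstacle is this first reduction: the value of $\binom{- 4 e - 5 k}{p - 3 e}$ modulo $p$ depends on which multiple of $p$ lies in the interval $[ 0, 4 e + 5 k]$, so a short case analysis with careful sign bookkeeping is required (in the spirit of the case distinctions already made in the proof of Theorem~\ref{thm:lucas:eta}), and it must be arranged so that the $k$-dependence collapses exactly to the residue $j$. The decisive point is then the elementary degree count $a ( e - 1) < 3 e - 1$, valid precisely for $a \in \{ 1, 2, 3 \}$, which explains the restriction on the exponent. For $a = 1$ one can avoid all of this: the sum equals $[ z^{2 n - 2 p}] \bigl( ( ( 1 + z)^5 - 1) / z \bigr)^n ( 1 + z)^{- n}$, a coefficient of a negative power of $z$ in a formal power series, hence identically $0$.
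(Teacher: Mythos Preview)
Your argument is correct and takes a genuinely different route from the paper's. Both proofs begin with the substitution $n = p - e$ (the paper writes $r$ for your $e$) and reduce $(-1)^{ak}\binom{n}{k}^a$ to $\binom{k+e-1}{e-1}^a$ modulo $p$. From there the paths diverge. The paper, following Calkin, keeps the second factor as the Pochhammer product $(3p-r-5k+1)_{p-3r}$, expands $(k+1)_{r-1}^{a-1}(3p-r-5k+1)_{p-3r}$ in the basis $(k+r)_j$, and telescopes the resulting sums over $k$; divisibility by $p$ then comes from $(p-r+1)_{r+j}$ containing the factor $p$, provided $r+j<p$ for all $j\leq N=(a-1)(r-1)+p-3r$. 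You instead push the reduction of the second binomial further, recognising $\binom{4n-5k}{3n-2p}\equiv(-1)^{e+j}\binom{3e-1}{j-1}$ with $j=(4e+5k)\bmod p$, and reindex via the bijection $k\mapsto j$ on $\mathbb{Z}/p\mathbb{Z}$; the sum then becomes a $(3e-1)$-st finite difference of a polynomial of degree $a(e-1)$, which vanishes once $a(e-1)<3e-1$. The two inequalities are identical (the paper's $(a-1)(r-1)<2r$ is exactly $a(e-1)<3e-1$), so both methods isolate the same reason the bound $a\leq 3$ is sharp. Your reindexing step makes the vanishing mechanism more transparent (it is literally a higher-order difference), at the cost of requiring $p\neq 5$ to be handled separately; the paper's Pochhammer telescoping avoids singling out $p=5$. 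Your generating-function remark for $a=1$, showing the sum is the coefficient of $z^{2n-2p}$ in a power series and hence identically zero, is a nice bonus not present in the paper.
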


\begin{proof}
  To prove these congruences we employ N.~Calkin's technique
  \cite{calkin-bin} for proving similar divisibility results for sums of
  powers of binomials \eqref{eq:binpowersums}. Denoting $r = p - n$, we have,
  by \eqref{eq:binom:neg} and \eqref{eq:binom:cancelp},
  \begin{align*}
    \sum_{k = 0}^n (- 1)^{a k} \binom{n}{k}^a \binom{4 n - 5 k}{3 n - 2 p} & =
    \sum_{k = 0}^{p - r} (- 1)^{a k} \binom{p - r}{k}^a \binom{4 p - 4 r - 5
    k}{p - 3 r}\\
    & = \sum_{k = 0}^{p - r} \binom{k - p + r - 1}{k}^a \binom{4 p - 4 r -
    5 k}{p - 3 r}\\
    & \equiv \sum_{k = 0}^{p - r} \binom{k + r - 1}{k}^a \binom{4 p - 4 r -
    5 k}{p - 3 r} \pmod{p} .
  \end{align*}
  Clearly,
  \begin{equation}
    \binom{k + r - 1}{k} = \frac{(k + 1) (k + 2) \cdots (k + r - 1)}{(r - 1)
    !} = \frac{(k + 1)_{r - 1}}{(r - 1) !}, \label{eq:binpoch}
  \end{equation}
  where $(x)_k = x (x + 1) \cdots (x + k - 1)$ denotes the Pochhammer symbol
  (in particular, $(x)_0 = 1$). Likewise,
  \begin{equation*}
    \binom{4 p - 4 r - 5 k}{p - 3 r} = \frac{(3 p - r - 5 k + 1)_{p - 3
     r}}{(p - 3 r) !}
  \end{equation*}
  Since $(r - 1) !$ and $(p - 3 r) !$ are not divisible by $p$, we have to
  show that
  \begin{equation}
    \sum_{k = 0}^{p - r} (k + 1)_{r - 1}^a (3 p - r - 5 k + 1)_{p - 3 r}
    \equiv 0 \pmod{p} . \label{eq:etad:sump}
  \end{equation}
  Since the polynomials $(x)_k, (x)_{k - 1}, \ldots, (x)_0$ form an integer
  basis for the space of all polynomials with integer coefficients and degree
  at most $k$, there exist integers $c_0, c_1, \ldots, c_N$ with $N = (a - 1)
  (r - 1) + p - 3 r$ so that
  \begin{equation*}
    (k + 1)_{r - 1}^{a - 1} (3 p - r - 5 k + 1)_{p - 3 r} = \sum_{j = 0}^N
     c_j (k + r)_j .
  \end{equation*}
  Then the left-hand side of \eqref{eq:etad:sump} becomes
  \begin{eqnarray}
    \sum_{k = 0}^{p - r} (k + 1)_{r - 1} \sum_{j = 0}^N c_j (k + r)_j & = &
    \sum_{j = 0}^N c_j \sum_{k = 0}^{p - r} (k + 1)_{r - 1} (k + r)_j
    \nonumber\\
    & = & \sum_{j = 0}^N c_j \sum_{k = 0}^{p - r} (k + 1)_{r + j - 1}
    \nonumber\\
    & = & \sum_{j = 0}^N c_j  \frac{(p - r + 1)_{r + j}}{r + j}, 
    \label{eq:etad:sump2}
  \end{eqnarray}
  where we used
  \begin{equation*}
    (x)_k - (x - 1)_k = k (x)_{k - 1}
  \end{equation*}
  to evaluate
  \begin{equation*}
    \sum_{k = 0}^{p - r} (k + 1)_{r + j - 1} = \sum_{k = 0}^{p - r} \frac{(k
     + 1)_{r + j} - (k)_{r + j}}{r + j} = \frac{(p - r + 1)_{r + j}}{r + j} .
  \end{equation*}
  The desired congruence therefore follows if we can show that
  \begin{equation}
    \frac{(p - r + 1)_{r + j}}{r + j} \equiv 0 \pmod{p}
    \label{eq:etad:div}
  \end{equation}
  for all $j = 0, 1, \ldots, N$. Since $r > 0$ and $j \geq 0$, the
  numerator $(p - r + 1)_{r + j}$ is always divisible by $p$. The congruences
  \eqref{eq:etad:div} thus follow if $r + j < p$ for all $j$, or,
  equivalently, $r + N < p$. Since
  \begin{equation*}
    r + N = (a - 1) (r - 1) + p - 2 r,
  \end{equation*}
  we have $r + N < p$ if and only if
  \begin{equation*}
    (a - 1) (r - 1) < 2 r.
  \end{equation*}
  Clearly, this inequality holds for all $r \geq 1$ if and only if $a
  \leq 3$.
\end{proof}

\begin{remark}
  Numerical evidence suggests that the values $a \in \{1, 3\}$ in
  Theorem~\ref{thm:lucas:eta} are the only choices for which the sequence
  \eqref{eq:eta:a} satisfies Lucas congruences. In light of
  Lemma~\ref{lem:lucas:eta:0}, it is natural to ask if there are additional
  values of $a$ and $\varepsilon$, for which the sequence
  \begin{equation*}
    \sum^n_{k = 0} (- 1)^{\varepsilon k} \binom{n}{k}^a \binom{4 n - 5 k}{3
     n}
  \end{equation*}
  satisfies Lucas congruences. Empirically, this does not appear to be the
  case. In particular, for $a = 2$ this sequence does not satisfy Lucas
  congruences for either $\varepsilon = 0$ or $\varepsilon = 1$.
\end{remark}

\section{Periodicity of residues}\label{sec:periodic}

The Ap\'ery numbers satisfy
\begin{equation}
  A ( n) \equiv (- 1)^n \pmod{3}, \label{eq:A:3}
\end{equation}
and so are periodic modulo $3$. As in the case of the congruences
\eqref{eq:A:8}, which show that the Ap\'ery numbers are also periodic modulo
$8$, the congruences \eqref{eq:A:3} were first conjectured in
\cite{ccc-apery} and then proven in \cite{gessel-super}. We say that a
sequence $C ( n)$ is {\emph{eventually periodic}} if there exists an integer
$M > 0$ such that $C ( n + M) = C ( n)$ for all sufficiently large $n$. An
initial numerical search suggests that each sporadic Ap\'ery-like sequence
listed in Tables~\ref{tbl:sporadic2} and \ref{tbl:sporadic3} can only be
eventually periodic modulo a prime $p$ if $p \leq 5$. As an application
of Theorem~\ref{thm:lucas}, we prove this claim next.

\begin{corollary}
  \label{cor:notperiodic}None of the sequences from Tables~\ref{tbl:sporadic2}
  and \ref{tbl:sporadic3} is eventually periodic modulo $p$ for any prime $p >
  5$.
\end{corollary}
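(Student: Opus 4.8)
The plan is to argue by contradiction. Suppose some sequence $C(n)$ from Tables~\ref{tbl:sporadic2} and~\ref{tbl:sporadic3} is eventually periodic modulo a prime $p > 5$, say $C(n + M) \equiv C(n) \pmod{p}$ for all $n \geq N_0$. Three facts will be used repeatedly: $C(n)$ satisfies the Lucas congruences \eqref{eq:lucas} by Theorem~\ref{thm:lucas}; $C(0) = 1$; and $C(1) = b$ (evaluate the defining recurrence \eqref{eq:rec2-abc} or \eqref{eq:rec3-abcd} at $n = 0$), where $a, b, c$ and possibly $d$ are the parameters listed in the tables. Since the tables show $2 \leq b \leq 6$ in every case, we have $p \nmid b$.

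First I would arrange that $\gcd(p, M) = 1$. Reading off $p$-adic digits, the Lucas congruences give $C(pn + j) \equiv C(j)C(n) \pmod{p}$ for $0 \leq j < p$. If $p \mid M$, then for large $n$ we have $b\,C(n) \equiv C(pn + 1) \equiv C(pn + 1 + M) = C\bigl(p(n + M/p) + 1\bigr) \equiv b\,C(n + M/p)$, and cancelling $b$ shows that $C(n)$ is also eventually periodic with period $M/p$. Iterating removes all factors of $p$ from the period, so from now on $\gcd(p, M) = 1$.

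Now set $r = \operatorname{ord}_M(p)$, so that $p^r \equiv 1 \pmod{M}$. For $0 \leq t < p^r$ and any $n$, the Lucas congruences give $C(np^r + t) \equiv C(t)C(n) \pmod{p}$; and $np^r + t \equiv n + t \pmod{M}$, so by eventual periodicity $C(n + t) \equiv C(t)C(n) \pmod{p}$ for all large $n$. Taking $t = d$ with $0 \leq d < p$, and also iterating the case $t = 1$ to get $C(n + d) \equiv b^d C(n) \pmod{p}$, we find $C(d)C(n) \equiv b^d C(n) \pmod{p}$; choosing $n = p^s$ large, so that $C(n) \equiv b \not\equiv 0 \pmod p$ by Lucas, we may cancel and conclude
\begin{equation*}
  C(d) \equiv b^d \pmod{p} \qquad (d = 0, 1, \ldots, p - 1).
\end{equation*}
(Via Lucas and Fermat this even upgrades to $C(n) \equiv b^n \pmod p$ for all $n$, but only the digits $d < p$ are needed below.)

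Finally I would feed this rigidity into the defining recurrence. Substituting $C(n-1) \equiv b^{n-1}$, $C(n) \equiv b^n$, $C(n+1) \equiv b^{n+1}$ into \eqref{eq:rec2-abc}, respectively \eqref{eq:rec3-abcd}, and dividing by $b^{n-1}$ (legitimate since $p \nmid b$), produces a polynomial congruence in $n$ modulo $p$ valid for $n = 1, 2, \ldots, p - 2$. The underlying polynomial has degree at most $2$, respectively at most $3$, and $p - 2 \geq 5$ exceeds this degree, so the polynomial must vanish identically modulo $p$. Extracting the coefficient of $n$ in the first case, and of $n^2$ in the second, gives $p \mid b(2b - a)$, respectively $p \mid 3b(b - a)$; as $p \nmid b$ and $p \nmid 3$, this forces $p \mid (a - 2b)$, respectively $p \mid (a - b)$. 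But the tables show $a - 2b \in \{3, 4, 5\}$ in the first case and $a - b \in \{4, 6, 8, 9, 12\}$ in the second — each a nonzero integer whose prime factors are all at most $5$ — so $p \leq 5$, a contradiction. The crux of the argument is the third step, where eventual periodicity is leveraged into the exact geometric law $C(d) \equiv b^d$; the digit bookkeeping and the reduction to $\gcd(p, M) = 1$ are the fiddly parts, while the concluding step is a routine inspection of two short lists of integers.
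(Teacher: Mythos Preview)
Your proof is correct. Both your argument and the paper's rest on the same backbone: eventual periodicity modulo~$p$ together with the Lucas congruences (Theorem~\ref{thm:lucas}) forces $C(d) \equiv C(1)^d \pmod{p}$ for all digits $0 \leq d < p$. The paper cites this as a result of Gessel~\cite{gessel-super}, whereas you reprove it in your Steps~1--3 (the reduction to $\gcd(p,M)=1$ and the order-$r$ trick).

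Where you diverge is in the final step. The paper simply computes $C(2)-C(1)^2$ and $C(3)-C(1)^3$ for each of the fifteen sequences and tabulates their common prime factors (Table~\ref{tbl:periodic}), observing that none exceeds~$5$. Your Step~4 instead feeds the geometric law $C(n) \equiv b^n$ back into the defining recurrence \eqref{eq:rec2-abc} or \eqref{eq:rec3-abcd}, obtaining a polynomial in~$n$ of degree at most~$2$ or~$3$ that must vanish identically modulo~$p$; reading off the coefficient of~$n$ (respectively~$n^2$) yields the clean divisibility constraint $p \mid (a-2b)$ (respectively $p \mid (a-b)$), and a glance at the parameter tables finishes the argument. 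This buys you a more structural explanation---the obstruction is located in a single linear combination of the recurrence parameters rather than in sequence-specific values---and the verification is correspondingly shorter (one small integer per sequence instead of two larger ones). The paper's tabulation, on the other hand, avoids any algebra with the recurrence and would work equally well for an \textbf{LP} sequence not arising from a three-term recurrence of this shape.
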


\begin{proof}
  Gessel \cite{gessel-super} shows that, if a sequence $C (n)$ satisfies the
  Lucas congruences \eqref{eq:lucas} modulo $p$ and is eventually periodic
  modulo $p$, then $C (n) \equiv C (1)^n$ modulo $p$ for all $n = 0, 1,
  \ldots, p - 1$.
  
  For instance, let $C (n)$ be the Almkvist--Zudilin sequence ($\delta$).
  Then, $C (1) = 3$, $C (2) = 9$ and $C (3) = 3$. Suppose $C (n)$ was
  eventually periodic modulo $p$. Then $p$ has to divide $C (3) - C (1)^3 = -
  24$, which implies that $p \in \{2, 3\}$.
  
  In Table~\ref{tbl:periodic} we list, for each sequence, the primes dividing
  both $C (2) - C (1)^2$ and $C (3) - C (1)^3$. The fact, that all these
  primes are at most $5$, proves our claim.
\end{proof}

\begin{table}[h]\centering
  \setlength{\tabcolsep}{3pt}
  \begin{tabular}{|c|c|c|c|c|c|c|c|c|c|c|c|c|c|c|}
    \hline
    (a) & (b) & (c) & (d) & (f) & (g) & ($\delta$) & ($\eta$) & ($\alpha$) &
    ($\epsilon$) & ($\zeta$) & ($\gamma$) & ($s_7$) & ($s_{10}$) &
    ($s_{18}$)\\
    \hline
    $2, 3$ & $2, 5$ & $2, 3$ & $2$ & $2, 3$ & $2, 3$ & $2, 3$ & $2, 5$ & $2,
    3$ & $2, 3$ & $2, 3$ & $2, 3$ & $2$ & $2$ & $2, 3$\\
    \hline
  \end{tabular}
  \caption{\label{tbl:periodic}The primes dividing both $C (2) - C (1)^2$ and
  $C (3) - C (1)^3$, for each sequence $C (n)$ from Tables~\ref{tbl:sporadic2}
  and \ref{tbl:sporadic3}.}
\end{table}

In fact, as another simple consequence of Theorem~\ref{thm:lucas}, we observe
that the Ap\'ery-like sequences are in fact eventually periodic modulo each
of the primes listed in Table~\ref{tbl:periodic}.

\begin{corollary}
  \label{cor:periodic}Let $C (n)$ be any sequence from
  Tables~\ref{tbl:sporadic2} and \ref{tbl:sporadic3}.
  \begin{itemize}
    \item $C (n) \equiv C (1) \pmod{2}$ for all $n
    \geq 1$.
    
    \item $C (n) \equiv C (1) \pmod{3}$ for all $n
    \geq 1$ if $C (n)$ is one of $( c)$, $( f)$, $( g)$, $(\delta)$,
    $(\alpha)$, $(\epsilon)$, $(\zeta)$, $s_{18}$, and $C (n) \equiv ( - 1)^n
    \pmod{3}$ for all $n \geq 0$ if $C (n)$ is $( a)$
    or $(\gamma)$.
    
    \item $C (n) \equiv 3^n \pmod{5}$ for all $n \geq
    0$ if $C (n)$ is $( b)$, and $C (n) \equiv 0 \pmod{5}$
    for all $n \geq 1$ if $C (n)$ is $( \eta)$.
  \end{itemize}
\end{corollary}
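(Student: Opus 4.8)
The plan is to derive everything from the Lucas congruences of Theorem~\ref{thm:lucas}, together with the single extra fact that every sequence $C(n)$ occurring in Tables~\ref{tbl:sporadic2} and \ref{tbl:sporadic3} satisfies $C(0)=1$. Fix a prime $p$ and write $n=n_0+n_1 p+\cdots+n_r p^r$ in base $p$. By Theorem~\ref{thm:lucas}, $C(n)\equiv C(n_0)C(n_1)\cdots C(n_r)\pmod p$, and since $C(0)=1$ only the digits $n_i\in\{1,\dots,p-1\}$ contribute to this product. Hence $C(n)\bmod p$ depends only on the multiset of nonzero base-$p$ digits of $n$, and it suffices to control the finitely many residues $C(1),\dots,C(p-1)$ modulo $p$. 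For $p=2$ there is only one nonzero digit, so for $n\geq 1$ we have $C(n)\equiv C(1)^{w(n)}\pmod 2$, where $w(n)\geq 1$ is the binary digit sum of $n$; since $x^2\equiv x\pmod 2$ for every integer $x$, this residue equals $C(1)\bmod 2$, which is the first assertion. No case distinction is needed here.

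For $p=3$ the Lucas product reads $C(n)\equiv C(1)^a C(2)^b\pmod 3$, where $a$ and $b$ count the base-$3$ digits of $n$ equal to $1$ and to $2$, respectively. For the eight sequences $(c)$, $(f)$, $(g)$, $(\delta)$, $(\alpha)$, $(\epsilon)$, $(\zeta)$, $s_{18}$ I would check straight from the binomial sums in the tables that $C(1)\equiv C(2)\pmod 3$ and that this common residue $c$ lies in $\{0,1\}$ (one finds $c=1$ for $(\alpha)$ and $(\epsilon)$, and $c=0$ for the remaining six). Then $C(n)\equiv c^{a+b}=c\equiv C(1)\pmod 3$ for all $n\geq 1$, using $a+b\geq 1$ and $c^j=c$ for $c\in\{0,1\}$. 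For $(a)$ and $(\gamma)$ one finds instead $C(d)\equiv(-1)^d\pmod 3$ for $d=0,1,2$ (concretely $C(1)\equiv -1$ and $C(2)\equiv 1$), so the Lucas product gives $C(n)\equiv(-1)^{n_0+\cdots+n_r}\pmod 3$; since $3\equiv 1\pmod 2$ we have $n_0+\cdots+n_r\equiv n\pmod 2$, whence $C(n)\equiv(-1)^n\pmod 3$ for all $n\geq 0$.

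For $p=5$ only two sequences are involved. For $(b)$ a direct computation of $C(0),\dots,C(4)$ gives $C(d)\equiv 3^d\pmod 5$ for $d=0,\dots,4$, so the Lucas product yields $C(n)\equiv 3^{n_0+\cdots+n_r}\pmod 5$; because $5\equiv 1\pmod 4$ we have $n_0+\cdots+n_r\equiv n\pmod 4$, and $3^4\equiv 1\pmod 5$ by Fermat's little theorem, so $C(n)\equiv 3^n\pmod 5$ for all $n\geq 0$. For $(\eta)$, the binomial sum in Table~\ref{tbl:sporadic3} reduces to its single $k=0$ term when $1\leq d\leq 4$ (its summation range being $0\leq k\leq\lfloor d/5\rfloor$), and evaluating it shows $C(d)\equiv 0\pmod 5$ for $d=1,2,3,4$; since every $n\geq 1$ has at least one nonzero base-$5$ digit, the corresponding factor in the Lucas product is divisible by $5$, and hence $C(n)\equiv 0\pmod 5$ for all $n\geq 1$.

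The whole argument is elementary once Theorem~\ref{thm:lucas} is available. The only real work is the bookkeeping of the finitely many base cases $C(d)\bmod p$ with $d<p$, and, in the three cases $(a)$, $(\gamma)$, $(b)$ where the target residue genuinely varies with $n$, the routine fact that the base-$p$ digit sum of $n$ is congruent to $n$ modulo $p-1$, which together with Fermat's little theorem is what lets the exponent in the Lucas product be replaced by $n$ itself. I do not expect any obstacle beyond organising these small computations cleanly.
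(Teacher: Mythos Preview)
Your proof is correct and follows essentially the same route as the paper's: reduce via the Lucas congruences of Theorem~\ref{thm:lucas} to the finitely many values $C(0),\dots,C(p-1)$, verify these by direct computation, and pass from the digit-sum exponent back to $n$ via Fermat's little theorem. The paper packages all cases uniformly as $C(n)\equiv C(1)^n\pmod p$ (checking $C(d)\equiv C(1)^d$ for $d<p$), whereas you split into sub-cases (e.g.\ using $c\in\{0,1\}$ for the eight mod-$3$ sequences), but this is a cosmetic difference in bookkeeping, not in substance.
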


\begin{proof}
  One can check that Table~\ref{tbl:periodic} does not change if we include
  only those primes $p$ such that $C (n) - C (1)^n$ is divisible by $p$ for
  all $n \in \{0, 1, 2, 3, 4\}$. For $n = 0$, this is trivial since $C ( 0) =
  1$. Therefore, in each of the cases considered here, we have
  \begin{equation*}
    C (n) \equiv C (1)^n \pmod{p}
  \end{equation*}
  for all $n \in \{0, 1, \ldots, p - 1\}$. For any $n \geq 0$, let $n =
  n_0 + n_1 p + \cdots + n_r p^r$ be the $p$-adic expansion of $n$. Then, by
  Theorem~\ref{thm:lucas}, we have
  \begin{eqnarray*}
    C (n) & \equiv & C (n_0) C (n_1) \cdots C (n_r) \pmod{p}\\
    & \equiv & C (1)^{n_0 + n_1 + \cdots + n_r} \pmod{p}\\
    & \equiv & C (1)^n \pmod{p} .
  \end{eqnarray*}
  For the final congruence we used Fermat's little theorem. All claimed
  congruences then follow from the specific initial values of $C (n)$ modulo
  $p$.
\end{proof}

More interestingly, the congruences \eqref{eq:A:8} show that the Ap\'ery
numbers (sequence $(\gamma)$) are periodic modulo $8$. We offer the following
corresponding result for the Almkvist--Zudilin sequence $(\delta)$.

\begin{theorem}
  \label{thm:AZ8}The Almkvist--Zudilin numbers
  \begin{equation*}
    Z (n) = \sum_{k = 0}^n (- 1)^k 3^{n - 3 k} \binom{n}{3 k} \binom{n +
     k}{n} \frac{(3 k) !}{k!^3}
  \end{equation*}
  satisfy the congruences
  \begin{equation}
    Z ( n) \equiv \left\{ \begin{array}{ll}
      1, & \text{if $n$ is even,}\\
      3, & \text{if $n$ is odd,}
    \end{array} \right. \pmod{8} . \label{eq:AZ:8}
  \end{equation}
\end{theorem}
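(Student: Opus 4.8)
The plan is to sidestep Lucas-type congruences altogether. Modulo a prime power such a congruence is simply not available here, since already $Z(3) = 3$ whereas $Z(1)^2 = 9 \equiv 1 \pmod{8}$, so a naive base-$2$ Lucas congruence modulo $8$ fails. Instead I would prove the sharper and cleaner statement
\begin{equation*}
  Z(n) \equiv 3^n \pmod{8} \qquad \text{for all } n \geq 0 ,
\end{equation*}
from which \eqref{eq:AZ:8} follows at once: since $3^2 = 9 \equiv 1 \pmod{8}$, one has $3^n \equiv 1 \pmod{8}$ when $n$ is even and $3^n \equiv 3 \pmod{8}$ when $n$ is odd.

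The key observation is that modulo $8$ the defining sum for $Z(n)$ collapses to its $k = 0$ term. First I would clear factorials in the summand, using $\binom{n+k}{n} = \binom{n+k}{k}$, to get the termwise identity
\begin{equation*}
  \binom{n}{3 k} \binom{n + k}{n} \frac{(3 k)!}{k!^3}
  = \frac{(n + k)!}{k!^4 \, (n - 3 k)!}
  = \binom{n + k}{4 k} \, \frac{(4 k)!}{k!^4},
\end{equation*}
valid for every $k$ occurring in the sum, the last factor being the multinomial coefficient $\binom{4 k}{k, k, k, k}$. Then, by Legendre's formula in the form $v_2(m!) = m - s_2(m)$, where $s_2(m)$ denotes the sum of the binary digits of $m$, and since $s_2(4 k) = s_2(k)$,
\begin{equation*}
  v_2\!\left( \frac{(4 k)!}{k!^4} \right) = \bigl( 4 k - s_2(k) \bigr) - 4 \bigl( k - s_2(k) \bigr) = 3 \, s_2(k) \geq 3
  \qquad (k \geq 1) .
\end{equation*}
Hence, for every $k \geq 1$, the $k$-th summand of $Z(n)$ is divisible by $8$, regardless of $n$; only the $k = 0$ term survives modulo $8$, and it equals $3^n$. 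This yields $Z(n) \equiv 3^n \pmod 8$ and hence \eqref{eq:AZ:8}.

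I do not expect a genuine obstacle: once one spots the identity that collapses the four binomial factors of the summand into $\binom{n + k}{4 k} \binom{4 k}{k, k, k, k}$, the divisibility by $8$ is immediate from Legendre's formula, and the parity dichotomy in \eqref{eq:AZ:8} is merely the behaviour of $3^n$ modulo $8$. The only mildly subtle point is recognizing that such a clean term-by-term mechanism is available at all; it is worth contrasting this with the analogous congruence \eqref{eq:A:8} for the Apéry numbers, where the summands $\binom{n}{k}^2 \binom{n + k}{k}^2$ are perfect squares, hence congruent to $0$, $1$ or $4$ modulo $8$, and several of them contribute nontrivially, which is precisely why the proof of \eqref{eq:A:8} is more delicate.
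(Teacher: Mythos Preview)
Your argument is correct and complete: the factorization of the summand as $(-1)^k 3^{n-3k}\binom{n+k}{4k}\binom{4k}{k,k,k,k}$ is an identity of integers for $0\le k\le\lfloor n/3\rfloor$ (and both sides vanish for larger $k$), and Legendre's formula indeed gives $v_2\!\left(\tfrac{(4k)!}{k!^4}\right)=3s_2(k)\ge 3$ for $k\ge 1$, so only the $k=0$ term survives modulo $8$ and equals $3^n$.

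This is, however, a genuinely different route from the paper's. The paper does not work with the binomial sum at all; instead it invokes the representation of $(-1)^n Z(n)$ as the diagonal of the rational function $1/\bigl(1-(x_1+x_2+x_3+x_4)+27x_1x_2x_3x_4\bigr)$ from \cite{s-apery}, and then appeals to the Rowland--Yassawi algorithm \cite{ry-diag13} to compute the finite state automaton describing the diagonal modulo $8$, observing that it coincides with the automaton for the Ap\'ery numbers modulo $8$, whence \eqref{eq:AZ:8} follows from \eqref{eq:A:8}. Your proof is far more elementary and entirely self-contained: it needs no diagonals, no automata, and not even the Ap\'ery congruence \eqref{eq:A:8} as input. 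The paper's approach, by contrast, is a black box here but has the advantage of being a general-purpose method that, in principle, decides periodicity modulo any fixed prime power for any sequence with a known rational diagonal model. It is also worth noting that your argument actually yields the slightly stronger statement $Z(n)\equiv 3^n\pmod 8$, which the paper does not isolate.
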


\begin{proof}
  It is shown in \cite{s-apery} that the numbers $(- 1)^n Z (n)$ are the
  diagonal Taylor coefficients of the multivariate rational function
  \begin{equation}
    F (x_1, x_2, x_3, x_4) = \frac{1}{1 - ( x_1 + x_2 + x_3 + x_4) + 27 x_1
    x_2 x_3 x_4} . \label{eq:AZ:rat}
  \end{equation}
  That is, if
  \begin{equation*}
    F (x_1, x_2, x_3, x_4) = \sum_{n_1 = 0}^{\infty} \sum_{n_2 = 0}^{\infty}
     \sum_{n_3 = 0}^{\infty} \sum_{n_4 = 0}^{\infty} C (n_1, n_2, n_3, n_4)
     x_1^{n_1} x_2^{n_2} x_3^{n_3} x_4^{n_4}
  \end{equation*}
  is the Taylor expansion of the rational function $F$, then $Z (n) = (- 1)^n
  C (n, n, n, n)$.
  
  Given such a rational function as well as a reasonably small prime power
  $p^r$, Rowland and Yassawi \cite{ry-diag13} give an explicit algorithm for
  computing a finite state automaton, which produces the values of the
  diagonal coefficients modulo $p^r$. In the present case, this finite state
  automaton for the values $(- 1)^n Z (n)$ modulo $8$ turns out to be the same
  automaton as the one for the Ap\'ery numbers modulo $8$. Hence, the
  congruences \eqref{eq:AZ:8} follow from the congruences \eqref{eq:A:8}. We
  refer to \cite{ry-diag13} for details on finite state automata and the
  algorithm to construct them from a multivariate rational generating
  function.
\end{proof}

Empirically, Theorem~\ref{thm:AZ8} is the only other interesting set of
congruences, apart from the congruences \eqref{eq:A:8}, which demonstrates
that an Ap\'ery-like sequence is periodic modulo a prime power. More
precisely, numerical evidence suggests that none of the sequences in
Tables~\ref{tbl:sporadic2} and \ref{tbl:sporadic3} is eventually periodic
modulo $p^r$, for some $r > 1$, unless $p = 2$. Moreover, the only other
instances modulo a power of $2$ appear to be the following, less interesting,
ones: sequences $( \text{d})$ and $(\alpha)$ are eventually periodic modulo
$4$ because all their terms, except the first, are divisible by $4$; likewise,
sequences $(\varepsilon)$ and $s_7$ are eventually periodic modulo $8$ because
all their terms, except the first, are divisible by $8$. We do not attempt to
prove these claims here. We remark, however, that these claims can be
established by the approach used in the proof of Theorem~\ref{thm:AZ8},
provided that one is able to determine a computationally accessible analog of
\eqref{eq:AZ:rat} for the sequence at hand.

\section{Primes not dividing Ap\'ery-like numbers}\label{sec:primes}

Using the Lucas congruences proved in Theorem~\ref{thm:lucas}, it is
straightforward to verify whether or not a given prime divides some
Ap\'ery-like number.

\begin{example}
  \label{eg:A:7}The values of Ap\'ery numbers $A ( 0), A ( 1), \ldots, A (
  6)$ modulo $7$ are $1, 5, 3, 3, 3, 5, 1$. Since $7$ does not divide $A ( 0),
  A ( 1), \ldots, A ( 6)$, it follows from the Lucas congruences
  \eqref{eq:lucas} that $7$ does not divide any Ap\'ery number.
\end{example}

Arguing as in Example~\ref{eg:A:7}, one finds that the primes $2, 3, 7, 13,
23, 29, 43, 47, \ldots$ do not divide any Ap\'ery number $A ( n)$.
E.~Rowland and R.~Yassawi \cite{ry-diag13} pose the question whether there
are infinitely many such primes. Table~\ref{tbl:primes} records, for each
sporadic Ap\'ery-like sequence, the primes below $100$ which do not divide
any of its terms, and the last column gives the proportion of primes below
$10^4$ with this property. Each Ap\'ery-like sequence is specified by its
label from \cite{asz-clausen}, which is also used in
Tables~\ref{tbl:sporadic2} and \ref{tbl:sporadic3}. The alert reader will
notice that Cooper's sporadic sequences (the ones with $d \neq 0$ in
Table~\ref{tbl:sporadic3}) are missing from Table~\ref{tbl:primes}. That is
because these sequences turn out to be divisible by all primes. A more precise
result for these sequences is proved at the end of this section.

\begin{table}[h]\centering
  \begin{tabular}{|c|l|l|}
    \hline
    (a) & 3, 11, 17, 19, 43, 83, 89, 97 & 0.2994\\
    \hline
    (b) & 2, 5, 13, 17, 29, 37, 41, 61, 73, 89 & 0.2897\\
    \hline
    (c) & 2, 7, 13, 37, 61, 73 & 0.2962\\
    \hline
    (d) & 3, 11, 17, 19, 43, 59, 73, 83, 89 & 0.2815\\
    \hline
    (f) & 2, 5, 13, 17, 29, 37, 41, 61, 73, 97 & 0.2994\\
    \hline
    (g) & 5, 11, 29, 31, 59, 79 & 0.2929\\
    \hline
    ($\delta$) & 2, 5, 7, 11, 13, 19, 29, 41, 47, 61, 67, 71, 73, 89, 97 &
    0.6192\\
    \hline
    ($\eta$) & 2, 3, 17, 19, 23, 31, 47, 53, 61 & 0.2897\\
    \hline
    ($\alpha$) & 3, 5, 13, 17, 29, 31, 37, 41, 43, 47, 53, 59, 61, 67, 71, 83,
    89 & 0.5989\\
    \hline
    ($\epsilon$) & 3, 7, 13, 19, 23, 29, 31, 37, 43, 47, 61, 67, 73, 83, 89 &
    0.6037\\
    \hline
    ($\zeta$) & 2, 5, 7, 13, 17, 19, 29, 37, 43, 47, 59, 61, 67, 71, 83, 89 &
    0.6046\\
    \hline
    ($\gamma$) & 2, 3, 7, 13, 23, 29, 43, 47, 53, 67, 71, 79, 83, 89 &
    0.6168\\
    \hline
  \end{tabular}
  \caption{\label{tbl:primes}The primes below $100$ not dividing
  Ap\'ery-like numbers (sequence indicated in first column using the labels
  from \cite{asz-clausen}) as well as the proportion of primes (in the last
  column) below $10, 000$ not dividing any term}
\end{table}

Example~\ref{eg:A:7} shows that the first $7$ values of the Ap\'ery numbers
modulo $7$ are palindromic. Our next result, which was noticed by E.~Rowland,
shows that this is true for all primes.

\begin{lemma}
  \label{lem:A:palin}For any prime $p$, and integers $n$ such that $0
  \leq n < p$, the Ap\'ery numbers $A ( n)$ satisfy the congruence
  \begin{equation}
    A ( n) \equiv A ( p - 1 - n) \pmod{p} . \label{eq:A:palin}
  \end{equation}
\end{lemma}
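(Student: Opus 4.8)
The plan is to write both $A(n)$ and $A(p-1-n)$ as sums over the common index range $0\leq k\leq p-1$ and then compare them term by term modulo $p$. Since $\binom{m}{k}=0$, as an ordinary binomial coefficient, whenever the integers satisfy $0\leq m<k$, the defining sum \eqref{eq:A} can be extended without changing its value, and likewise for $A(p-1-n)$:
\begin{equation*}
  A(n)=\sum_{k=0}^{p-1}\binom{n}{k}^2\binom{n+k}{k}^2,\qquad
  A(p-1-n)=\sum_{k=0}^{p-1}\binom{p-1-n}{k}^2\binom{p-1-n+k}{k}^2 .
\end{equation*}
Here we use $0\leq n<p$ and $0\leq p-1-n<p$, so the terms introduced by the extension are genuinely zero in each case.

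The next step is to reduce, modulo $p$, the two binomial factors occurring in the second sum. For every $k$ with $0\leq k<p$, congruence \eqref{eq:binom:cancelp} (removing a single copy of $p$ from the top entry) together with the negation rule \eqref{eq:binom:neg} gives
\begin{equation*}
  \binom{p-1-n}{k}\equiv\binom{-1-n}{k}=(-1)^k\binom{n+k}{k}\pmod{p}
\end{equation*}
and, in the same way,
\begin{equation*}
  \binom{p-1-n+k}{k}\equiv\binom{k-1-n}{k}=(-1)^k\binom{n}{k}\pmod{p},
\end{equation*}
where $\binom{k-1-n}{k}$ is interpreted via \eqref{eq:binom:neg}, so that it simply equals $(-1)^k\binom{n}{k}$, which vanishes as soon as $k>n$. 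Substituting these two congruences into the extended sum for $A(p-1-n)$, the factors $(-1)^{2k}$ cancel, and we obtain
\begin{equation*}
  A(p-1-n)\equiv\sum_{k=0}^{p-1}\binom{n+k}{k}^2\binom{n}{k}^2\pmod{p}.
\end{equation*}
The right-hand side is exactly the extended form of $A(n)$; contracting the range back to $0\leq k\leq n$, which is legitimate because $\binom{n}{k}=0$ for $k>n$, recovers $A(n)$ on the nose. This proves \eqref{eq:A:palin}.

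I do not expect any real obstacle here, only routine bookkeeping: one must check that the range extensions add only zero terms (immediate), that \eqref{eq:binom:cancelp} applies (it does, since the removed multiple of $p$ lies in the top entry while the bottom entry $k$ satisfies $0\leq k<p$), and that the reduction $\binom{p-1-n+k}{k}\equiv(-1)^k\binom{n}{k}\pmod p$ is valid and consistent for every $0\leq k<p$, including the range $n<k<p$ where both sides are $\equiv 0$ — it is this uniformity that makes the ``extend, substitute, contract'' maneuver run cleanly. Finally, we remark that the same computation, using only \eqref{eq:binom:neg} and \eqref{eq:binom:cancelp}, shows more generally that $A_{r,s}(p-1-n)\equiv A_{s,r}(n)\pmod p$ for $0\leq n<p$ for the generalized Apéry sequences of \eqref{eq:A:x} whenever $r+s$ is even; the exponents $r=s=2$ of the Apéry numbers play no special role beyond this.
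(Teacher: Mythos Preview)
Your proof is correct and follows essentially the same approach as the paper's own proof: both extend the sum to $0\le k\le p-1$, then apply \eqref{eq:binom:cancelp} and \eqref{eq:binom:neg} to each binomial factor to swap $\binom{p-1-n}{k}$ with $(-1)^k\binom{n+k}{k}$ and $\binom{p-1-n+k}{k}$ with $(-1)^k\binom{n}{k}$. The paper's argument is simply a terser version of yours; your additional remark about $A_{r,s}(p-1-n)\equiv A_{s,r}(n)\pmod p$ for $r+s$ even is a correct and natural generalization not stated in the paper.
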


\begin{proof}
  For $n$ such that $0 \leq n < p$, we employ \eqref{eq:binom:neg} and
  \eqref{eq:binom:cancelp} to arrive at
  \begin{eqnarray*}
    A ( p - 1 - n) & = & \sum_{k = 0}^{p - 1} \binom{p - 1 - n}{k}^2 \binom{p
    - 1 - n + k}{k}^2\\
    & \equiv & \sum_{k = 0}^{p - 1} \binom{n + k}{k}^2 \binom{n}{k}^2 = A (
    n) \pmod{p},
  \end{eqnarray*}
  as claimed.
\end{proof}

Theorem~\ref{thm:lucas} and Lemma~\ref{lem:A:palin}, considered together,
suggest that $e^{- 1 / 2} \approx 60.65\%$ of the primes do not divide any
Ap\'ery number. Indeed, let us make the empirical assumption that the values
$A ( n)$ modulo $p$, for $n = 0, 1, \ldots, ( p - 1) / 2$, are independent and
uniformly random. Since one of the values $A ( n)$ is congruent to $0$ modulo
$p$ with probability $1 / p$, it follows that the probability that $p$ does
not divide any of the $( p + 1) / 2$ first values is
\begin{equation}
  \left( 1 - \frac{1}{p} \right)^{( p + 1) / 2} . \label{eq:prop:p}
\end{equation}
By the Lucas congruences, shown in Theorem~\ref{thm:lucas}, and
Lemma~\ref{lem:A:palin}, $p$ does not divide any of the $( p + 1) / 2$ first
values if and only if $p$ does not divide any Ap\'ery number. In the limit
$p \rightarrowlim \infty$, the proportion \eqref{eq:prop:p} becomes $e^{- 1 /
2}$. Observe that this empirical prediction matches the numerical data in
Table~\ref{tbl:primes} rather well. We therefore arrive at the following
conjecture.

\begin{conjecture}
  \label{conj:prop:apery}The proportion of primes not dividing any Ap\'ery
  number $A ( n)$ is $e^{- 1 / 2}$.
\end{conjecture}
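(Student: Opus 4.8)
The plan is to make rigorous the heuristic of the preceding paragraph; I expect it cannot be carried to completion with present-day tools, and I indicate below exactly where it stalls. The key first step is already in place: by Theorem~\ref{thm:lucas} and Lemma~\ref{lem:A:palin}, a prime $p$ divides some Ap\'ery number if and only if $p$ divides one of $A(0), A(1), \ldots, A((p-1)/2)$. Writing $Z_p \assign \#\{0 \leq n \leq (p-1)/2 : p \mid A(n)\}$, Conjecture~\ref{conj:prop:apery} asserts that the set of primes with $Z_p = 0$ has density $e^{-1/2}$. The underlying heuristic is that the $(p+1)/2$ residues $A(0), \ldots, A((p-1)/2) \bmod p$ should behave like independent uniform samples from $\mathbb{Z}/p\mathbb{Z}$, so that $Z_p$ is approximately Poisson-distributed with mean $(p+1)/(2p) \to 1/2$, and hence $Z_p = 0$ with probability tending to $e^{-1/2}$.

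To turn this into a proof I would pass to factorial moments. From the elementary identity $\mathbf{1}[Z_p = 0] = \sum_{j \geq 0} (-1)^j \binom{Z_p}{j}$, valid because $\sum_j (-1)^j \binom{m}{j} = 0$ for $m \geq 1$, together with a Bonferroni truncation, it suffices to establish two things: first, that for each fixed $j \geq 0$,
\begin{equation*}
  \frac{1}{\pi(x)} \sum_{p \leq x} \binom{Z_p}{j} \longrightarrow \frac{1}{j!} \left( \frac{1}{2} \right)^j \qquad \text{as } x \to \infty,
\end{equation*}
and second, a uniform bound $\pi(x)^{-1} \sum_{p \leq x} \binom{Z_p}{j} \leq c^j / j!$ for some absolute constant $c$ and all $x$, which legitimises interchanging the limit in $x$ with the sum over $j$; the claimed density $\sum_{j \geq 0} (-1)^j 2^{-j}/j! = e^{-1/2}$ then follows. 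Expanding $\binom{Z_p}{j}$ as a count of $j$-element subsets $\{n_1, \ldots, n_j\} \subseteq \{0, \ldots, (p-1)/2\}$ with $p \mid A(n_i)$ for every $i$, and interchanging the order of summation, the $j$-th moment estimate becomes a statement about how often a fixed $j$-tuple of Ap\'ery numbers shares a common large prime factor. Already the case $j = 1$ amounts to
\begin{equation*}
  \frac{1}{\pi(x)} \sum_{n \geq 0} \# \{\, p \leq x : 2 n < p, \; p \mid A(n) \,\} \longrightarrow \frac{1}{2},
\end{equation*}
equivalently, that on average over primes $p \leq x$ the fibre $\{0 \leq n < p : p \mid A(n)\}$ has size tending to $1$.

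The main obstacle, and I believe the genuine one, is precisely this equidistribution input. The crude estimate coming from $\log A(n) = n \log \bigl( (1 + \sqrt{2})^4 \bigr) + O(\log n)$ shows only that $A(n)$ has $O(n/\log n)$ prime factors exceeding $n$, which overestimates the left-hand side of the $j=1$ limit by a factor of order $x$; extracting the correct constant requires knowing that a typical Ap\'ery number is composed almost entirely of primes that are small relative to $n$ (a smoothness statement), together with control of the rare exceptions. No such information about the factorisations of the $A(n)$, and no equidistribution result for $A(n) \bmod p$ that is uniform enough in $p$, appears to be available; indeed, even the far weaker assertion that infinitely many primes divide no Ap\'ery number, raised by Rowland and Yassawi, is open. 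For $j \geq 2$ one needs in addition that distinct Ap\'ery numbers do not share large prime factors more often than chance predicts, a further instance of the same difficulty. Short of progress on these arithmetic questions, the most one could realistically hope to establish is the conditional statement that the density in question, if it exists, equals $e^{-1/2}$.
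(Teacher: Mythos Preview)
The statement in question is a \emph{conjecture} in the paper, not a theorem; the paper offers only the heuristic argument preceding it and explicitly leaves the claim open. There is therefore no proof to compare against, and you correctly recognise this at the outset by stating that the plan ``cannot be carried to completion with present-day tools.''

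Your proposal is not a proof but an honest analysis of what a proof would require, and as such it is accurate and well-judged. The factorial-moment framework you describe is the natural way to convert a Poisson heuristic into a density statement, and you correctly locate the obstruction: already the first moment $\pi(x)^{-1}\sum_{p\leq x} Z_p \to 1/2$ demands equidistribution information about $A(n)\bmod p$ that is not available. Your observation that even the $j=1$ case is out of reach is consistent with the paper's own remark that the far weaker question of Rowland and Yassawi---whether infinitely many primes divide no Ap\'ery number---remains open. In short, the paper does not prove the conjecture, you do not claim to prove it, and your diagnosis of why it resists proof is sound.
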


\begin{figure}[h]\centering
  \includegraphics[width=.95\textwidth]{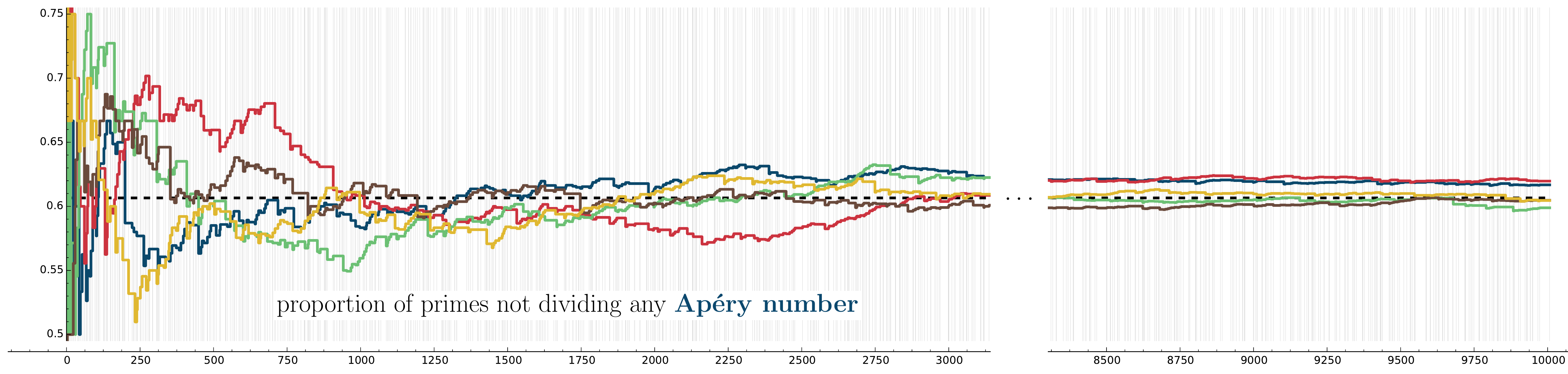}
  \caption{\label{fig:primes}Proportion of primes (up to $10, 000$) not
  dividing the sequences $( \delta)$, $( \alpha)$, $( \epsilon)$, $( \zeta)$,
  $( \gamma)$, with the dotted line indicating $e^{- 1 / 2}$. The Ap\'ery
  sequence is plotted in blue. (We thank Arian Daneshvar for producing this
  plot.)}
\end{figure}

While Lemma~\ref{lem:A:palin} does not hold for the other Ap\'ery-like
numbers $C ( n)$ from Tables~\ref{tbl:sporadic2} and \ref{tbl:sporadic3}, we
make the weaker observation that if a prime $p > 5$ divides $C ( n)$, where $0
\leq n < p$, then $p$ also divides $C ( p - 1 - n)$. We expect that this
empirical observation can be proven in the spirit of the proof of
Lemma~\ref{lem:A:palin}, but do not pursue this theme further. We only note
that it allows us to extend the heuristic leading to
Conjecture~\ref{conj:prop:apery} to the Ap\'ery-like sequences $( \delta)$,
$( \alpha)$, $( \epsilon)$, $( \zeta)$ from Table~\ref{tbl:sporadic3}. In
other words, we conjecture that, for each of these sequences, the proportion
of primes not dividing any of the terms is again $e^{- 1 / 2}$.
Figure~\ref{fig:primes} visualizes some numerical evidence for this
conjecture. On the other hand, for sequence $( \eta)$ as well as the sequences
from Table~\ref{tbl:sporadic2}, the proportion of primes not dividing any of
their terms appears to be about half of that, that is $e^{- 1 / 2} / 2 \approx
30.33\%$.

To explain this extra factor of $1 / 2$, we note that, for the Ap\'ery-like
numbers
\begin{equation}
  A_b ( n) = \sum_k \binom{n}{k}^2 \binom{n + k}{n}, \label{eq:Ab}
\end{equation}
Stienstra and Beukers \cite{sb-picardfuchs} proved that, modulo $p$,
\begin{equation}
  A_b \left( \frac{p - 1}{2} \right) \equiv \left\{ \begin{array}{ll}
    4 a^2 - 2 p, & \text{if $p = a^2 + b^2$, $a$ odd,}\\
    0, & \text{if $p \equiv 3 \pmod{4}$}
  \end{array} \right. \label{eq:Ab:half}
\end{equation}
(and conjectured that the congruence should hold modulo $p^2$, which was later
proved by Ahlgren and Ono \cite{ahlgren-ono-apery}). In particular,
congruence \eqref{eq:Ab:half} makes it explicit that every prime $p \equiv 3
\pmod{4}$ divides a term of this Ap\'ery-like sequence.
Note that, by a classical congruence of Gauss, the congruences
\eqref{eq:Ab:half} are equivalent, modulo $p$, to the congruences
\begin{equation}
  A_b \left( \left\lfloor \frac{p}{2} \right\rfloor \right) \equiv \left\{
  \begin{array}{ll}
    \binom{\lfloor p / 2 \rfloor}{\lfloor p / 4 \rfloor}^2, & \text{if $p
    \equiv 1 \pmod{4}$},\\
    0, & \text{otherwise},
  \end{array} \right. \label{eq:Ab:half:b}
\end{equation}
which are valid for any prime $p \neq 2$. The more general result in
\cite{sb-picardfuchs} also includes the cases $A_a$ and $A_c$. Similar
divisibility results appear to hold for the other Ap\'ery-like numbers from
Table~\ref{tbl:sporadic2}, and it would be interesting to make these explicit.

On the other hand, the extra factor of $1 / 2$ in case of sequence $( \eta)$
is explained by the following congruences, which resemble \eqref{eq:Ab:half:b}
remarkably well.

\begin{theorem}
  For any prime $p \neq 3$, we have that, modulo $p$,
  \begin{equation}
    A_{\eta} \left( \left\lfloor \frac{p}{3} \right\rfloor \right) \equiv
    \left\{ \begin{array}{ll}
      (- 1)^{\lfloor p / 5 \rfloor} \binom{\lfloor p / 3 \rfloor}{\lfloor p /
      15 \rfloor}^3, & \text{if $p \equiv 1, 2, 4, 8 \pmod{15}$},\\
      0, & \text{otherwise} .
    \end{array} \right. \label{eq:Aeta:third}
  \end{equation}
\end{theorem}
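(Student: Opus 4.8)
The plan is to evaluate the binomial representation of $A_\eta$ recorded in Table~\ref{tbl:sporadic3}, namely
\[
  A_\eta(n) = \sum_{k=0}^{\lfloor n/5 \rfloor} (-1)^k \binom{n}{k}^3 \left[ \binom{4n - 5k - 1}{3n} + \binom{4n - 5k}{3n} \right]
\]
(valid for $n \geq 1$), term by term modulo $p$ at $n = \lfloor p/3 \rfloor$. The tiny cases $p \in \{2, 5\}$ are checked by hand and $p = 3$ is excluded, so assume $p > 5$; then $n < p$, so $n$ has a single $p$-adic digit and Lucas' congruence \eqref{eq:lucas:bin} applies. The key structural fact is that $p - 3n \in \{1, 2\}$: precisely, $3n = p - 1$ if $p \equiv 1 \pmod 3$ and $3n = p - 2$ if $p \equiv 2 \pmod 3$.

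First I would record, exactly as in the proof of Theorem~\ref{thm:lucas:eta} via \eqref{eq:binom:cancelp}, that for $3n \leq v < 2p$ one has $\binom{v}{3n} \equiv 0 \pmod p$ whenever $v \geq p$ (because then $0 \leq v - p < 3n$), while $\binom{v}{3n} \not\equiv 0 \pmod p$ whenever $3n \leq v \leq p - 1$. Writing $v_k = 4n - 5k$, the values $v_k$ for $k = 0, 1, \ldots, \lfloor n/5 \rfloor$ form an arithmetic progression of common difference $5$ lying in $[\,3n + (n \bmod 5),\, 4n\,]$. Hence the $k$-th summand vanishes modulo $p$ unless $v_k \leq p$, and since the window $[\,3n + (n \bmod 5),\, p\,]$ has length at most $p - 3n \leq 2$ while the progression has step $5$, at most one summand survives, namely the one with $v_k$ minimal, i.e. $k^\ast \assign \lfloor n/5 \rfloor = \lfloor p/15 \rfloor$, carrying $v^\ast \assign v_{k^\ast} = 3n + (n \bmod 5)$.

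Next I would split on whether that window is empty. If $3n + (n \bmod 5) > p$, then every summand is $\equiv 0$, so $A_\eta(\lfloor p/3 \rfloor) \equiv 0 \pmod p$; translating $p \bmod 15$ into $p \bmod 3$ together with $n \bmod 5$, one checks this is exactly the case $p \equiv 7, 11, 13, 14 \pmod{15}$, which (with $p = 3, 5$ set aside) covers all residues outside $\{1, 2, 4, 8\}$. Otherwise $v^\ast \leq p$ and
\[
  A_\eta(\lfloor p/3 \rfloor) \equiv (-1)^{k^\ast} \binom{n}{k^\ast}^3 \left[ \binom{v^\ast - 1}{3n} + \binom{v^\ast}{3n} \right] \pmod p,
\]
where $\binom{n}{k^\ast} = \binom{\lfloor p/3 \rfloor}{\lfloor p/15 \rfloor}$ already matches the claimed cube. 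There are two subcases: either $v^\ast = 3n$ (forcing $n \equiv 0 \pmod 5$, i.e. $p \equiv 1, 2 \pmod{15}$), where the bracket equals $\binom{3n-1}{3n} + \binom{3n}{3n} = 1$; or $v^\ast = p$ (which occurs for $p \equiv 4, 8 \pmod{15}$), where the bracket is $\binom{p-1}{3n} + \binom{p}{3n} \equiv (-1)^{3n} \pmod p$ by the standard evaluation $\binom{p-1}{j} \equiv (-1)^j$. (The only other possibility, $v^\ast = p - 1 < p$, forces $p = 5$.)

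The last step is to reconcile the sign with $(-1)^{\lfloor p/5 \rfloor}$, and this is the single place where primality is genuinely used. In the first subcase $p = 15 k^\ast + 1$ or $15 k^\ast + 2$, so $\lfloor p/5 \rfloor = 3 k^\ast \equiv k^\ast \pmod 2$ and the signs agree. In the second subcase $p = 15 k^\ast + 4$ or $p = 15 k^\ast + 8$; here $\lfloor p/5 \rfloor$ equals $3 k^\ast$ or $3 k^\ast + 1$ while $3n$ is correspondingly odd or even, so the needed identity $(-1)^{k^\ast}(-1)^{3n} = (-1)^{\lfloor p/5 \rfloor}$ reduces to $k^\ast$ being odd — which is forced, since $15 k^\ast + 4$ (resp. $15 k^\ast + 8$) with $k^\ast$ even would be an even integer exceeding $2$. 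I expect this parity bookkeeping, together with the translation between $p \bmod{15}$, $p \bmod 3$, and $n \bmod 5$, to be the most error-prone part of the write-up; everything else is a direct application of Lucas' theorem in the form already deployed in Section~\ref{sec:lucas2}.
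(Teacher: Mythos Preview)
Your argument is correct and follows essentially the same strategy as the paper: both proofs observe that in the sum for $A_\eta(\lfloor p/3\rfloor)$ all but at most one term vanish modulo $p$, identify that surviving term as $k^\ast=\lfloor p/15\rfloor$, and then evaluate it according to the residue of $p$ modulo $15$. The only cosmetic difference is that the paper detects the vanishing by rewriting $\binom{4n-5k}{3n}$ and $\binom{4n-5k-1}{3n}$ as Pochhammer quotients and counting factors of $p$, whereas you use Lucas' congruence directly (note, incidentally, that your parenthetical ``then $0\le v-p<3n$'' relies on $v\le 4n$, not merely $v<2p$, but that is exactly the range you need); your explicit parity bookkeeping for the sign is also more detailed than the paper's ``combining the signs properly''.
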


\begin{proof}
  Suppose that $p \equiv 2 \pmod{3}$, and write $p = 3 n +
  2$. The congruence \eqref{eq:Aeta:third} can be checked directly for $p = 2$
  and $p = 5$, and so we may assume $p > 5$ in the sequel. Applying
  \eqref{eq:binpoch} to the definition of sequence $( \eta)$ in
  Table~\ref{tbl:sporadic3}, we have
  \begin{align}
    A_{\eta} (n) & = \sum_{k = 0}^{\lfloor n / 5 \rfloor} (- 1)^k
    \binom{n}{k}^3 \left( \binom{4 n - 5 k - 1}{3 n} + \binom{4 n - 5 k}{3 n}
    \right) \nonumber\\
    & = \sum_{k = 0}^{\lfloor n / 5 \rfloor} (- 1)^k \binom{n}{k}^3 \left(
    \frac{( n - 5 k)_{3 n}}{( 3 n) !} + \frac{( n - 5 k + 1)_{3 n}}{( 3 n) !}
    \right) .  \label{eq:etad:sp}
  \end{align}
  Since $3 n = p - 2$ and $0 \leq k \leq n / 5$, the term
  \begin{equation}
    \frac{( n - 5 k)_{3 n}}{( 3 n) !} \label{eq:etad:poch}
  \end{equation}
  is always divisible by $p$, unless $n - 5 k \in \{ 1, 2 \}$ (because,
  otherwise, one of the $p - 2$ factors of $( n - 5 k)_{3 n}$ is divisible by
  $p$, while $( 3 n) !$ is not). Note that $n - 5 k = 1$ and $n - 5 k = 2$ are
  equivalent to $k = ( p - 5) / 15$ and $k = ( p - 8) / 15$, respectively.
  However, $( p - 5) / 15$ cannot be an integer (since $p \neq 5$). We thus
  find that \eqref{eq:etad:poch} vanishes modulo $p$ unless $p \equiv 8
  \pmod{15}$ and $k = \lfloor p / 15 \rfloor$, in which
  case \eqref{eq:etad:poch} is congruent to $- 1$ modulo $p$. Combined with
  the analogous discussion for the corresponding second term in
  \eqref{eq:etad:sp}, we conclude that
  \begin{equation*}
    \frac{( n - 5 k)_{3 n}}{( 3 n) !} + \frac{( n - 5 k + 1)_{3 n}}{( 3 n) !}
     \equiv \left\{ \begin{array}{ll}
       1, & \text{if $k = \lfloor p / 15 \rfloor$ and $p \equiv 2 \pmod{15}$},\\
       - 1, & \text{if $k = \lfloor p / 15 \rfloor$ and $p \equiv 8
       \pmod{15}$},\\
       0, & \text{otherwise} .
     \end{array} \right.
  \end{equation*}
  Applying this to the sum \eqref{eq:etad:sp} and combining the signs
  properly, we arrive at the congruences \eqref{eq:Aeta:third} when $p \equiv
  2 \pmod{3}$.
  
  The case $p \equiv 1 \pmod{3}$ is similar and a little
  bit simpler.
\end{proof}

In summary, we conjecture that the proportion of primes not dividing any term
of the Ap\'ery-like sequences in Tables~\ref{tbl:sporadic2} and
\ref{tbl:sporadic3} is as follows.

\begin{conjecture}
  \label{conj:prop:aperyx}{\tmdummy}
  
  \begin{itemize}
    \item Let $C ( n)$ be one of the sequences of Table~\ref{tbl:sporadic2} or
    sequence $( \eta)$. Then the proportion of primes not dividing any $C (
    n)$ is $\frac{1}{2} e^{- 1 / 2}$.
    
    \item Let $C ( n)$ be one of the sequences $( \delta)$, $( \alpha)$, $(
    \epsilon)$, $( \zeta)$, $( \gamma)$ from Table~\ref{tbl:sporadic3}. Then
    the proportion of primes not dividing any $C ( n)$ is $e^{- 1 / 2}$.
  \end{itemize}
\end{conjecture}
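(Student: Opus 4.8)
The plan is to reduce Conjecture~\ref{conj:prop:aperyx} to an equidistribution statement for the residues $C(n)\bmod p$, and then to identify that statement as the genuine obstruction. Fix one of the listed sequences $C(n)$. By the Lucas congruences of Theorem~\ref{thm:lucas}, a prime $p$ divides some $C(n)$ if and only if it divides one of $C(0),C(1),\ldots,C(p-1)$; the first step is to halve this window. For the Ap\'ery sequence $(\gamma)$ this is exactly Lemma~\ref{lem:A:palin}, which gives $A(n)\equiv A(p-1-n)\pmod p$ for $0\le n<p$, and for the remaining sequences one expects the (weaker but sufficient) implication $p\mid C(n)\Rightarrow p\mid C(p-1-n)$ to hold for all $p>5$ by the same mechanism: substitute $p-1-n$ into the binomial sum defining $C$, use \eqref{eq:binom:neg} to rewrite binomial coefficients with negative upper entry as ordinary ones, use \eqref{eq:binom:cancelp} to discard multiples of $p$, and check that the summation range and the signs match modulo $p$ after the substitution. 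Granting this, $p$ divides some term of $C$ precisely when $p$ divides one of the roughly $p/2$ values $C(0),C(1),\ldots,C(\lfloor (p-1)/2\rfloor)$; since $C(0)=1$, there are about $p/2$ genuinely ``active'' indices.

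The second step isolates the single congruence-class constraint that separates the two bullets of the conjecture. For the sequences of Table~\ref{tbl:sporadic2} one expects a Stienstra--Beukers-type evaluation of $C(\lfloor p/2\rfloor)$ modulo $p$: for $A_a,A_b,A_c$ this is in \cite{sb-picardfuchs} (the case $A_b$ being \eqref{eq:Ab:half:b}), and one would need to establish the analogues for $(\text{d}),(\text{f}),(\text{g})$. Such a formula exhibits a set of residue classes of $p$ of density $\tfrac12$ on which $C(\lfloor p/2\rfloor)\equiv 0\pmod p$, with $C(\lfloor p/2\rfloor)$ a product of binomial coefficients having all entries $<p$ --- hence a unit modulo $p$ --- on the complementary classes. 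For $(\eta)$ the corresponding statement is precisely the congruences \eqref{eq:Aeta:third}: the active index $\lfloor p/3\rfloor$ plays the role of $\lfloor p/2\rfloor$, the density-$\tfrac12$ family $p\equiv 1,2,4,8\pmod{15}$ plays the role of $p\equiv 1\pmod 4$, and $(-1)^{\lfloor p/5\rfloor}\binom{\lfloor p/3\rfloor}{\lfloor p/15\rfloor}^3$ is again a unit modulo $p$. For the five sequences $(\delta),(\alpha),(\epsilon),(\zeta),(\gamma)$ of the second bullet, one must instead verify --- presumably from their modular or Hecke-eigenvalue descriptions --- that no such forced vanishing occurs at any active index.

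The conjecture then follows from the probabilistic model already sketched in the paper, once it is made rigorous: as $p\to\infty$, the active values $C(n)\bmod p$, with $n$ running over $1,\ldots,\lfloor (p-1)/2\rfloor$ but omitting the one index carrying a forced value in the first-bullet case, equidistribute and behave like independent uniform residues. Under this model the probability that $p$ divides no term of $C$ factors as $\Pr[\text{no active value is }0]\cdot\Pr[\text{the forced value, where present, is nonzero}]$: the first factor is $(1-1/p)^{p/2+O(1)}\to e^{-1/2}$, while the second is $1$ for the sequences of the second bullet and, by the density computation above, $\tfrac12$ for the sequences of the first bullet. This gives the claimed proportions $\tfrac12 e^{-1/2}$ and $e^{-1/2}$.

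The main obstacle is this last step. Even a single-value equidistribution result for $C(n)\bmod p$ across a window of length $\Theta(p)$ seems to require serious arithmetic input --- a Chebotarev- or Sato--Tate-type statement for the family of modular forms (or motives) attached to $C$ --- and the independence of the active values across the window, which the heuristic also uses, appears to lie beyond current methods. By contrast, the two reductions above are concrete: the palindrome relation should be provable exactly as in Lemma~\ref{lem:A:palin}, and the forced-value congruences are already known for $A_a,A_b,A_c$ and for $(\eta)$ and should be within reach, for the remaining Table~\ref{tbl:sporadic2} sequences, of the Pochhammer and Calkin-type arguments used elsewhere in this paper.
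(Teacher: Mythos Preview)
The statement you are attempting to prove is labeled a \emph{Conjecture} in the paper, and the paper does not supply a proof. What the paper offers is exactly the heuristic you have reconstructed: Lucas congruences reduce the question to the first $p$ values; a palindrome-type relation (proved for $(\gamma)$ in Lemma~\ref{lem:A:palin}, observed empirically for the others) halves the window; forced vanishings such as \eqref{eq:Ab:half} and \eqref{eq:Aeta:third} account for the extra factor $\tfrac12$ in the first bullet; and modeling the remaining values as independent uniform residues gives the limits $e^{-1/2}$ and $\tfrac12 e^{-1/2}$. Your write-up is a faithful and slightly more detailed restatement of this same heuristic.

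You are also candid that the decisive step --- equidistribution and independence of the residues $C(n)\bmod p$ across a window of length $\Theta(p)$ --- ``appears to lie beyond current methods.'' That is the genuine gap, and it is the same gap the paper leaves open. Consequently your proposal is not a proof of the conjecture any more than the paper's discussion is; it is a sharpened outline of why the conjectured constants are the natural ones. The auxiliary steps you flag as ``should be provable'' (the palindrome implication for the remaining sequences, the Stienstra--Beukers-type evaluations for $(\text{d}),(\text{f}),(\text{g})$, and the absence of forced vanishing for $(\delta),(\alpha),(\epsilon),(\zeta),(\gamma)$) are also left as expectations in the paper, so even the reductions are not fully established. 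In short: your approach matches the paper's, and both stop at the same unproved probabilistic assumption.
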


In stark contrast, Cooper's sporadic sequences $s_7$, $s_{10}$, $s_{18}$ from
Table~\ref{tbl:sporadic3} are divisible by all primes. Indeed, let $C ( n)$
denote any of these three sequences. Then,
\begin{equation*}
  C ( p - 1) \equiv 0 \pmod{p}
\end{equation*}
for all primes $p$. In fact, we can prove much more. For any given prime $p$,
the last quarter (or third) of the first $p$ terms of these sequences are
divisible by $p$. In the case of sequence $s_{10}$, the sum of fourth powers
of binomial coefficients, this is proved by N.~Calkin \cite{calkin-bin}.
Indeed, among other divisibility results on sums of powers of binomials,
Calkin shows that, for all integers $a \geq 0$, the sums
\begin{equation}
  \sum_{k = 0}^n \binom{n}{k}^{2 a} \label{eq:binpowersums}
\end{equation}
are divisible by all primes $p$ in the range
\begin{equation*}
  n < p < n + 1 + \frac{n}{2 a - 1} .
\end{equation*}
In particular, in the case $a = 2$, we conclude that $s_{10} ( n)$ is
divisible by all primes $p$ that satisfy $n < p < \frac{4 n}{3} + 1$.
Equivalently, we have
\begin{equation*}
  s_{10} ( p - j) \equiv 0 \pmod{p}
\end{equation*}
whenever $1 \leq j \leq ( p + 2) / 4$. Our final result proves the
same phenomenon for Cooper's sporadic sequences $s_7, s_{18}$. We note that in
each case, empirically, the bounds on $j$ cannot be improved (with the
expection of the case $p = 3$ for $s_{18}$; see Remark~\ref{rk:s18:mod3}).

\begin{theorem}
  \label{thm:cc}For any prime $p$, we have
  \begin{equation*}
    s_7 ( p - j) \equiv 0 \pmod{p}
  \end{equation*}
  whenever $1 \leq j \leq ( p + 1) / 3$, and
  \begin{equation*}
    s_{18} ( p - j) \equiv 0 \pmod{p}
  \end{equation*}
  whenever $1 \leq j \leq ( p + 2) / 4$.
\end{theorem}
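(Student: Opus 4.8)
The plan is to prove both congruences by the most direct route available: to show that, under the stated bound on $j$, \emph{every} summand in a suitable binomial representation of $s_7(p-j)$, respectively $s_{18}(p-j)$, is individually divisible by $p$, so that no cancellation or closed-form evaluation of a residual sum is needed. Throughout, set $n=p-j$, so that every summation index $k$ satisfies $0\le k\le n<p$.

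For $s_7$ I would use the representation $s_7(n)=\sum_{k=0}^n\binom{n}{k}^2\binom{n+k}{k}\binom{2k}{n}$ from Table~\ref{tbl:sporadic3}. Two elementary observations carry the argument. First, $\binom{2k}{n}$ is an ordinary binomial coefficient that vanishes whenever $2k<n$, i.e.\ for $k<n/2$. Second, when $k\ge j$ we have $p\le n+k<2p$, so Lucas's congruence \eqref{eq:lucas:bin} (or \eqref{eq:binom:cancelp}) gives $\binom{n+k}{n}\equiv\binom{(n+k)-p}{n}=\binom{k-j}{n}\pmod p$, which is $0$ because $0\le k-j<n$. Hence the only summands not already visibly divisible by $p$ are those with $\lceil n/2\rceil\le k\le j-1$, and this range is empty precisely when $\lceil(p-j)/2\rceil\ge j$. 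A short case analysis on the parity of $p-j$ turns this into $j\le(p+1)/3$ — one must check in particular that at the boundary value $j=(p+1)/3$, which forces $p\equiv2\pmod 3$, the integer $p-j$ is odd, so the parity cooperates. This yields $s_7(p-j)\equiv0\pmod p$ for all $j$ in the asserted range.

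For $s_{18}$ I would use the simplified form \eqref{eq:s18:s}, $s_{18}(n)=\sum_{k=0}^n(-1)^k\binom{n}{k}\binom{2k}{k}\binom{2(n-k)}{n-k}\binom{2n-3k}{n}$. Here three factors are in play. Since $0\le k<p$, the central binomial $\binom{2k}{k}$ is divisible by $p$ whenever $p/2<k$, because then $(2k)!$ contains exactly one multiple of $p$ while $(k!)^2$ contains none; symmetrically $\binom{2(n-k)}{n-k}\equiv0\pmod p$ whenever $p/2<n-k$. Thus a summand can survive modulo $p$ only for $k$ in the interval $\tfrac{p-2j+1}{2}\le k\le\tfrac{p-1}{2}$, which consists of exactly $j$ consecutive integers clustered about $p/2$. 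The crux is then to verify that on this whole interval one has $0\le 2n-3k<n$: the left inequality is $k\le 2(p-j)/3$, the right one is $k>(p-j)/3$, and each holds throughout the interval exactly when $j<(p+3)/4$, which is implied by $j\le(p+2)/4$. For such $k$ the generalized binomial $\binom{2n-3k}{n}=\frac{(2n-3k)(2n-3k-1)\cdots(2n-3k-n+1)}{n!}$ has a factor $0$ in its numerator and so vanishes outright, whence $s_{18}(p-j)\equiv0\pmod p$. The small primes $p=2,3$ I would check by hand; $p=3$ is the case in which the bound on $j$ is not sharp, cf.\ Remark~\ref{rk:s18:mod3}.

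The arguments are short, and the step I expect to demand the most care is the interval bookkeeping: pinning down the floor/ceiling thresholds $\lceil(p-j)/2\rceil$, $(p+1)/3$, $(p+2)/4$ so that they match the claimed ranges \emph{exactly}, and confirming at the boundary values of $j$ that the parity of $p-j$ leaves no stray non-vanishing summand. Should the naive term-by-term estimate fail to close those boundary cases, the fallback is Calkin's technique as in the proof of Lemma~\ref{lem:lucas:eta:0}: reduce the surviving binomials modulo $p$ via \eqref{eq:binom:neg}--\eqref{eq:binom:cancelp} to polynomials in $k$, expand the non-Pochhammer factors in the integer basis of Pochhammer symbols, telescope the resulting sums, and read off divisibility by $p$ from a numerator of the form $(p-r+1)_{r+N}$ with $r+N<p$.
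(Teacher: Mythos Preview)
Your proposal is correct. For $s_7$, your argument is essentially the paper's: both note that $\binom{2k}{n}=0$ for $k<n/2$ and then show that for $k\ge\lceil n/2\rceil$ the factor $\binom{n+k}{k}$ is divisible by $p$ (you invoke Lucas, the paper invokes the factor $p$ in $(n+k)!$; the underlying inequality $\lceil(p-j)/2\rceil\ge j$ is identical).

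For $s_{18}$ the two routes diverge. The paper works with the original representation \eqref{eq:s18}, where the sum is already restricted to $k\le\lfloor n/3\rfloor$; on that whole range one has $n-k\ge\lceil 2n/3\rceil\ge(p+1)/2$ whenever $j\le(p+2)/4$, so the single factor $\binom{2(n-k)}{n-k}$ (equivalently $(2(n-k))!$ in the numerator) carries the divisibility by $p$ for every term. Your route via the unfolded sum \eqref{eq:s18:s} is equally valid but trades simplicity of the range for an extra step: the two central binomials localize the surviving terms to a $j$-element window around $p/2$, and you then kill those with the exact vanishing of $\binom{2n-3k}{n}$. Both arguments are term-by-term and reach the same conclusion; the paper's is a touch shorter because the restriction $k\le\lfloor n/3\rfloor$ built into \eqref{eq:s18} does half of your localization work in advance. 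Your fallback to Calkin's technique is not needed.
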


\begin{proof}
  For the sequence $s_7$, we want to show
  \begin{equation*}
    \sum_{k = 0}^{p - j} \binom{p - j}{k}^2 \binom{p - j + k}{k} \binom{2
     k}{p - j} \equiv 0 \pmod{p},
  \end{equation*}
  for $1 \leq j \leq ( p + 1) / 3$. Note that for $2 k < p - j$ or
  $k > p - j$ the summand is already zero. Therefore, we assume that $p - j
  \geq k \geq ( p - j) / 2$. We write the summand as
  \begin{equation*}
    \binom{p - j}{k}^2 \binom{p - j + k}{k} \binom{2 k}{p - j} = \frac{( p -
     j + k) ! ( 2 k) !}{k!^3 ( p - j - k) !^2 ( 2 k - p + j) !},
  \end{equation*}
  and observe that the denominator is not divisible by $p$ if $j \geq 1$.
  On the other hand, the factorial $( p - j + k) !$ in the numerator is
  divisible by $p$ since
  \begin{equation*}
    p - j + k \geq p - j + \left\lceil \frac{p - j}{2} \right\rceil
     \geq p,
  \end{equation*}
  where we used $j \leq ( p + 1) / 3$ to verify the final inequality.
  Thus, for $1 \leq j \leq ( p + 1) / 3$, the congruences $s_7 ( p -
  j) \equiv 0$ hold modulo $p$, as claimed.
  
  We proceed similarly for $s_{18} ( p - j)$, which is given by
  \begin{equation*}
    \scaleeq{0.88}{\sum^{\lfloor ( p - j) / 3 \rfloor}_{k = 0} (- 1)^k \binom{p -
    j}{k} \binom{2 k}{k} \binom{2 (p - j - k)}{p - j - k} \left\{ \binom{2 (
    p - j) - 3 k - 1}{p - j} + \binom{2 ( p - j) - 3 k}{p - j} \right\},}
  \end{equation*}
  and, using \eqref{eq:binpoch}, write the summand as
  \begin{equation}
    \frac{( - 1)^k ( 2 k) ! ( 2 (p - j - k)) !}{k!^3 ( p - j - k) !^3}  ( p -
    j - 3 k + 1)_{p - j - 1} ( 3 p - 3 j - 6 k) . \label{eq:s18:summand}
  \end{equation}
  None of the terms in the denominator is divisible by $p$ since $j \geq
  1$. On the other hand, $( 2 (p - j - k)) !$ in the numerator is divisible by
  $p$ since
  \begin{equation*}
    2 (p - j - k) \geq 2 \left( p - j - \left\lfloor \frac{p - j}{3}
     \right\rfloor \right) \geq p,
  \end{equation*}
  where we used $j \leq ( p + 2) / 4$ for the final inequality.
  Therefore, for $1 \leq j \leq ( p + 2) / 4$, each of the terms in
  the sum for $s_{18} ( p - j)$ is a multiple of $p$, and we obtain the
  desired congruences.
\end{proof}

\begin{remark}
  \label{rk:s18:mod3}Employing \eqref{eq:s18:summand}, we observe that $s_{18}
  ( n) \equiv 0 \pmod{3}$ for $n \geq 1$, which
  reaffirms Corollary~\ref{cor:periodic} for this sequence.
\end{remark}

Finally, as noted in \cite{cooper-sporadic}, each of the sequences in
Table~\ref{tbl:sporadic2} times $\binom{2 n}{n}$ is an integer solution of
\eqref{eq:rec3-abcd} with $d \neq 0$. Observe that $\binom{2 n}{n}$ is
divisible by a prime $p$ for all $n$ such that $n < p \leq 2 n$. This
results in a (weaker) analog of Theorem~\ref{thm:cc} for these Ap\'ery-like
sequences, and implies, in particular, that these sequences are again
divisible by all prime numbers.

\section{Conclusion and open questions}

In Sections~\ref{sec:lucas} and \ref{sec:lucas2}, we showed that all sporadic
solutions of \eqref{eq:rec2-abc} and \eqref{eq:rec3-abcd}, given in
Tables~\ref{tbl:sporadic2} and \ref{tbl:sporadic3}, uniformly satisfy Lucas
congruences. However, for two of these sequences, especially sequence
$(\eta)$, we had to resort to a rather technical analysis. We therefore wonder
if there is a representation of these sequences from which the Lucas
congruences can be deduced more naturally, based on, for instance the
approaches of \cite{sd-laurent09} and \cite{mv-laurent13}, or
\cite{ry-diag13}. More generally, it would be desirable to have a uniform
approach to these congruences, possibly directly from the shape of the
defining recurrences and associated differential equations. In another
direction, it would be interesting to show that, as numerical evidence
suggests, {\emph{all}} of the Ap\'ery-like sequences in fact satisfy the
Dwork congruences \eqref{eq:dwork}.

The congruences \eqref{eq:A:8} show that the Ap\'ery numbers are periodic
modulo $8$, alternating between the values $1$ and $5$. As a consequence, the
other residue classes $0, 2, 3, 4, 6, 7$ modulo $8$ are never attained. On the
other hand, the observations in Section~\ref{sec:primes} show that certain
primes do not divide any Ap\'ery number. This can be rephrased as saying
that the residue class $0$ is not attained by the Ap\'ery numbers modulo
these primes. This leads us to the question of which residue classes are not
attained by Ap\'ery-like numbers modulo prime powers $p^{\alpha}$. In
particular, are there interesting cases which are not explained by
Sections~\ref{sec:periodic} and \ref{sec:primes}?

The second part of congruence \eqref{eq:Ab:half} makes it explicit that every
prime $p \equiv 3 \pmod{4}$ divides a term of the
Ap\'ery-like sequence \eqref{eq:Ab}. Is there a similarly explicit result
which demonstrates that the Ap\'ery numbers are divisible by infinitely many
distinct primes?

\section*{Acknowledgements}

This paper builds on experimental results obtained together with Arian
Daneshvar, Pujan Dave and Zhefan Wang during an Illinois Geometry Lab (IGL)
project during the Fall 2014 semester at the University of Illinois at
Urbana-Champaign (UIUC). The aim of the IGL is to introduce undergraduate
students to mathematical research. We wish to thank Arian, Pujan and Zhefan
(at the time undergraduate students in engineering at UIUC) for their great
work. In particular, their experiments predicted
Corollaries~\ref{cor:notperiodic} and \ref{cor:periodic}, and provided the
data for Table~\ref{tbl:primes}, which lead to
Conjecture~\ref{conj:prop:apery}.

We are also grateful to Eric Rowland, who visited UIUC in October 2014, for
interesting discussions on Ap\'ery-like numbers and finite state automata,
as well as for observing the congruence \eqref{eq:A:palin}.

Finally, we would like to express our gratitude to Tewodros Amdeberhan, Bruce
C.~Berndt, Robert Osburn and Wadim Zudilin for many helpful comments and
encouragement.

\small

\end{document}